\newcommand{\widesim}[2][1.5]{
	\mathrel{\overset{#2}{\scalebox{#1}[1]{$\sim$}}}
}
\renewenvironment{proof}[1][\proofname]{%
	\par\pushQED{\qed}\normalfont%
	\topsep6\p@\@plus6\p@\relax
	\trivlist\item[\hskip\labelsep\bfseries#1\@addpunct{.}]%
	\ignorespaces
}{%
	\popQED\endtrivlist\@endpefalse
}
\tikzset{double line with arrow/.style args={#1,#2}{decorate,decoration={markings,%
			mark=at position 0 with {\coordinate (ta-base-1) at (0,1pt);
				\coordinate (ta-base-2) at (0,-1pt);},
			mark=at position 1 with {\d[#1] (ta-base-1) -- (0,1pt);
				\d[#2] (ta-base-2) -- (0,-1pt);
}}}}
\patchcmd{\arrowfill@}{-7mu}{-14mu}{}{}
\patchcmd{\arrowfill@}{-7mu}{-14mu}{}{}
\patchcmd{\arrowfill@}{-2mu}{-4mu}{}{}
\patchcmd{\arrowfill@}{-2mu}{-4mu}{}{}
\theoremstyle{plain}
\newtheorem{theorem}{Theorem}[section]
\newtheorem{corollary}[theorem]{Corollary}
\newtheorem{proposition}[theorem]{Proposition}
\theoremstyle{remark}
\newtheorem{remark}[theorem]{Remark}
\theoremstyle{definition}
\newtheorem{definition}[theorem]{Definition}
\begin{document}

\title{Strong Nielsen equivalence on the punctured disc}
\author{Stavroula Makri}
\address{Faculty of Mathematics and Computer Science, Nicolaus Copernicus University, ul. Chopina 12/18, 87-100 Toruń, Poland}
\email{makri@mat.umk.pl}

\begin{abstract}
Let $f$ be an orientation-preserving homeomorphism of the 2-disc $\mathbb{D}^2$ that fixes the boundary pointwise and leaves invariant a finite subset in the interior of $\mathbb{D}^2$.
We study the strong Nielsen equivalence of periodic points of such homeomorphisms $f$ and we give a necessary and sufficient condition for two periodic points to be strong Nielsen equivalent in the context of braid theory. In addition, we present an application of our result to the trace formula given by Jiang--Zheng, deducing that the obtained forced periodic orbits belong to different strong Nielsen classes.
	\\\\
	\noindent \textit{Keywords:} Nielsen theory; Strong Nielsen equivalence; Periodic points; Braid groups.  
\end{abstract}

\maketitle

\section{Introduction}
A classical problem related to Nielsen theory deals with the question of determining the minimum number of fixed points among all maps homotopic to a given map $f$ from a compact space to itself.   
In particular, let $X$ be a compact connected polyhedron, $f:X\to X$ a continuous self-map, and let $Fix(f)=\{x\in X\ |\ f(x)=x\}$ be the set of fixed points of $f$. One is interested in studying the so-called minimal number of the map $f$ denoted by $MF[f]$ and defined as $MF[f]=\text{min}\{\#Fix(g)\ | \ g\sim f\}$, where $\sim$ means homotopic. We define an equivalence relation on the set of fixed points of $f$ in the following way. Two fixed points belong in the same fixed point class if there exists a path $\gamma$ joining them, such that $f(\gamma)\sim \gamma$ keeping the endpoints fixed during the homotopy. The equivalence classes with respect to this relation are called the Nielsen classes of $f$. The Nielsen number $N(f)$ is defined as the number of Nielsen classes having non-zero fixed-point index sum. The number $N(f)$ is a homotopy invariant which gives a lower bound for $MF[f]$, that is $N(f)\leq MF[f]$.

To compute $N(f)$ one needs to decide whether two fixed points belong to the same fixed point class. This geometric problem was described in an algebraic context by Reidemeister in the following way. One can associate a Reidemeister class to each fixed point of $f$. Then two fixed points belong to the same fixed point class if and only if their Reidemeister classes coincide. Let $\phi:G\to G$ be an endomorphism of a group $G$. Then $u,v\in G$ are said to be Reidemeister equivalent or $\phi$-conjugate if there exists $w\in G$ such that $v=\phi(w)\cdot u\cdot w^{-1}$. This equivalence relation gives rise to the Reidemeister classes. Briefly, one interprets $\phi$ as the endomorphism $f_{\pi}$ induced by $f$ on the fundamental group $\pi_1(X)$ of $X$. 
		
In \cite{guaschi}, Guaschi uses braid group theory to give a necessary and sufficient condition, in terms of a conjugacy problem in the braid group, for distinguishing Reidemeister classes for automorphisms of the free group $F_n$ of rank $n$. These automorphisms are induced by orientation-preserving homeomorphisms of the 2-disc $\mathbb{D}^2$, relative to an $n$-point invariant set $A$, lying in the interior of $\mathbb{D}^2$. We will briefly give further insights of this result. 
 Due to Alexander's trick we know that any orientation preserving homeomorphism of $\mathbb{D}^2$ that fixes the boundary pointwise is isotopic to the identity map.
Let $B_n$ denote the Artin braid group on $n$-strands, $B_{n,1}$ the subgroup of $B_{n+1}$ whose induced permutation stabilizes $(n+1)$ and let $U_{n+1}$ be the kernel of the homomorphism $B_{n,1}\to B_n$, defined geometrically by removing the $(n+1)^{st}$ strand. Given an orientation-preserving homeomorphism $f$ of $\mathbb{D}^2$ that leaves invariant an $n$-point set $A$ and fixes the boundary pointwise, one can associate to $A$ a braid $\beta\in B_n$ by fixing an isotopy between the identity and $f$. The strands of $\beta$ appear naturally by following $A$ under the isotopy. Similarly, every fixed point $x\in \text{Int}(\mathbb{D}^2)\setminus A$ defines a braid $\beta_x$ in $B_{n,1}\subset B_{n+1}$, by following $A\cup\{x\}$ under the isotopy, whose induced permutation stabilizes $(n+1)$. Additionally, to any fixed point $x\in \text{Int}(\mathbb{D}^2)\setminus A$ one can associate an element $u\in\mathbb{F}_n$, since $U_{n+1}\cong \mathbb{F}_n$. In Section \ref{s1} and Section \ref{s2}, we present in detail how periodic points, and thus also fixed ones, are related to braid elements.
 In Theorem 1 of \cite{guaschi}, stated in Section \ref{s3}, it is shown that, given $\phi$ an automorphism of $F_n$, induced by $\beta\in B_n$ (associated to the invariant $n$-point set $A$), two elements $u,v\in \mathbb{F}_n$ belong in the same Reidemeister class if and only if the associated braids $\beta_u, \beta_v$ are conjugate in $B_{n,1}$ via an element of $U_{n+1}$.
 Consequently, any braid conjugacy invariant, such as linking number properties, Thurston type, application of Garside's algorithm and link invariants may be used to show that $\beta_u$ and $\beta_v$ are not conjugate in $B_{n+1}$ and thus that the two elements $u,v\in \mathbb{F}_n$ are not Reidemeister equivalent.
 In Section \ref{s3},  we show that an equivalent way of interpreting this result could be the following: the fixed points $x,y$, that correspond to $u, v$ respectively, are in the same Nielsen class, while taking into consideration the invariant $n$-point set $A$, if and only if $\beta_u, \beta_v$ are conjugate in $B_{n,1}$ via an element of $U_{n+1}$.

This work is inspired by \cite{guaschi} and aims to study the behaviour of periodic points with respect to a given invariant $n$-point set of orientation-preserving homeomorphisms of $\mathbb{D}^2$. In particular, in our first result we provide a criterion to decide when two periodic points of an orientation-preserving homeomorphism of a compact, connected, orientable surface $M$ are strong Nielsen equivalent in the sense of Asimov--Franks \cite{asimov} by further assuming contractible isotopy. More precisely, let $x,y$ be two periodic points of the same period of a homeomorphism $f$. We say that $x,y$ are strong Nielsen equivalent if there exists a contractible isotopy $f_t:f\simeq f$ and a path $\gamma:[0,1]\to M$ such that $\gamma(0)=x$, $\gamma(1)=y$, and $\gamma(t)$ is a periodic point, for every $t\in (0,1)$, of the same period as $x$ and $y$. An isotopy is said to be a deformation of another isotopy if the corresponding paths in the group of homeomorphisms are homotopic with fixed endpoints. A self-isotopy is called contractible if it is a deformation of the trivial isotopy. We show that two periodic points are strong Nielsen equivalent if and only if they correspond to the same class inside the corresponding mapping class group. Such criterion in the case of $\mathbb{D}^2$ is given in terms of a conjugacy problem in a braid group, as we will see in Section \ref{s2}.   

Let us fix some notation before stating the result. Given a compact, connected orientable surface $M$ we consider orientation-preserving homeomorphisms $f:M\to M$ that are isotopic to the identity. We denote by $P_n(f)$ the set of all periodic points with period $n$ of $f$ and by $o(x,f)$ the orbit of a periodic point $x$ of $f$.  For $n\in \mathbb{N}$, let $X_n\in \text{Int}(M)$ be a set of $n$ distinct points. Given a periodic orbit, $o(x,f)$, of period $n$, choose a homeomorphism $h:M\to M$ isotopic to the identity, such that $h(o(x,f))=X_n$. Then, let the strong Nielsen type of the orbit, denoted by $snt(x,f)$, be the conjugacy class of $[hfh^{-1}]$ in MCG$(M, X_n)$, where $[\cdot]$ represents the isotopy class and MCG$(M, X_n)$ the mapping class group of $M$ relative to $X_n$.  

For the following proposition we assume that $M$ is either a compact, connected orientable surface with negative Euler characteristic or the 2-disc $\mathbb{D}^2$, where in this case we restrict our attention to homeomorphisms that fix pointwise the boundary; see Remark \ref{extra}.

\begin{proposition}\label{thm1}
	Let $f:M\to M$ be an orientation-preserving homeomorphism and let $x,y\in P_n(f)$. It holds that $x,y$ are strong Nielsen equivalent, denoted by $x\widesim{\text{\tiny SN}}y$, if and only if $snt(x,f)=snt(y,f)$.
\end{proposition}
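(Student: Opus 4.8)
The plan is to prove both implications by translating the geometric notion of strong Nielsen equivalence into statements about paths in homeomorphism groups and their conjugacy classes in the relevant mapping class group. Throughout, write $X_n$ for the fixed reference set of $n$ points and recall that $snt(x,f)$ is defined by choosing $h$ isotopic to the identity with $h(o(x,f))=X_n$ and taking the conjugacy class of $[hfh^{-1}]$ in $\mathrm{MCG}(M,X_n)$. A preliminary observation, which I would isolate as a short lemma, is that this class does not depend on the choice of $h$: if $h,h'$ are both isotopic to the identity and carry $o(x,f)$ onto $X_n$, then $h'h^{-1}$ is isotopic to the identity and maps $X_n$ to itself, so it represents an element of the kernel of $\mathrm{MCG}(M,X_n)\to\mathrm{MCG}(M)$; conjugating $[hfh^{-1}]$ by $[h'h^{-1}]$ gives $[h'fh'^{-1}]$, and one must check these are conjugate in $\mathrm{MCG}(M,X_n)$ — this uses that an isotopy of $M$ fixing nothing can be chosen (by the isotopy extension / point-pushing description) so that the ambient path is contractible, which is exactly where the hypothesis on $M$ (negative Euler characteristic, or $\mathbb{D}^2$ rel boundary, where $\mathrm{Homeo}$ is contractible by Alexander's trick) enters.

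For the forward implication, suppose $x\widesim{\text{\tiny SN}}y$, so there is a contractible isotopy $f_t\colon f\simeq f$ and a path $\gamma$ from $x$ to $y$ with $\gamma(t)\in P_n(f)$ for all $t$, where here the period-$n$ orbit moves along with $t$ — more precisely the orbit $o(\gamma(t),f)$ varies continuously. I would use the path $t\mapsto o(\gamma(t),f)$ in the configuration space of $n$-point subsets of $\mathrm{Int}(M)$ to build an ambient isotopy $g_t$ of $M$, starting at $g_0=\mathrm{id}$, with $g_t(o(x,f))=o(\gamma(t),f)$; this exists by the isotopy extension theorem. Composing with the fixed $h$ for $x$, the homeomorphisms $h g_t^{-1}$ carry $o(\gamma(t),f)$ to $X_n$. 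Now the key computation: $snt(\gamma(t),f)$ is represented by $(hg_t^{-1}) f (hg_t^{-1})^{-1} = h g_t^{-1} f g_t h^{-1}$, and one shows that as $t$ runs from $0$ to $1$ this traces a path in $\mathrm{Homeo}(M,X_n)$ from a representative of $snt(x,f)$ to a representative of $snt(y,f)$; using the contractibility of $f_t$ together with the path of $g_t$, the two endpoints lie in the same connected component of the space of homeomorphisms fixing $X_n$ setwise, hence define conjugate mapping classes. I would be careful to feed the contractibility of $f_t$ in here so that the loop in $\mathrm{Homeo}(M)$ obtained by comparing the two descriptions is nullhomotopic, which is what lets us pass from "same component setwise" to "equal after conjugation by a setwise-stabilising class".

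For the converse, suppose $snt(x,f)=snt(y,f)$. Pick $h_x,h_y$ isotopic to the identity with $h_x(o(x,f))=h_y(o(y,f))=X_n$. The hypothesis gives $\alpha\in\mathrm{MCG}(M,X_n)$ with $[\alpha](h_x f h_x^{-1})[\alpha]^{-1}=[h_y f h_y^{-1}]$ in $\mathrm{MCG}(M,X_n)$. Lift $\alpha$ to a homeomorphism $a$ fixing $X_n$ setwise; then $h_y^{-1} a h_x$ is a homeomorphism $b$ with $b(o(x,f))=o(y,f)$ and $bfb^{-1}$ isotopic to $f$ rel $o(y,f)$, i.e. there is a self-isotopy $f_t\colon f\simeq f$ and we may arrange (again using the point-pushing exact sequence and the contractibility of $\mathrm{Homeo}(M)$ in our cases) that $b$ is isotopic to the identity through homeomorphisms and that the resulting self-isotopy of $f$ is contractible. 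Tracking the image of $x$ under a path from $\mathrm{id}$ to $b$ produces the required path $\gamma$ from $x$ to $y$ consisting of period-$n$ points. I expect the main obstacle in both directions to be the same point: promoting the purely algebraic equality/conjugacy in $\mathrm{MCG}(M,X_n)$ to a \emph{contractible} geometric isotopy, rather than just some isotopy — this is precisely why the statement restricts $M$ and invokes Remark \ref{extra}, and the cleanest way to organise it is to work with the fibration $\mathrm{Homeo}(M)\to \mathrm{Conf}_n(\mathrm{Int}\,M)$ and its long exact sequence, using $\pi_1\mathrm{Homeo}(M)$ being trivial (or handled by the boundary condition) to kill the ambiguity.
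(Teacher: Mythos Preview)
Your forward direction contains a genuine slip. By Definition~\ref{defsne} one only has $\gamma(t)\in P_n(f_t)$, not $\gamma(t)\in P_n(f)$; the orbit you must track is $o(\gamma(t),f_t)$, and the conjugated homeomorphism has to be $(hg_t^{-1})\,f_t\,(hg_t^{-1})^{-1}$. With $f$ in place of $f_t$ this map need not preserve $X_n$ at all, so the path you describe does not lie in $\mathrm{Homeo}(M,X_n)$ and the argument collapses. Once this is corrected your construction does give a path in $\mathrm{Homeo}(M,X_n)$ joining representatives of $snt(x,f)$ and $snt(y,f)$, so the two are in the \emph{same} isotopy class rel $X_n$ (not merely conjugate), and no appeal to contractibility of $f_t$ is needed here. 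This corrected argument is essentially what the paper invokes by citing Lemma~8 of \cite{hall} with $f=g$.

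For the converse the paper takes a more direct route than yours. Rather than lifting the conjugating class to a homeomorphism $a$, forming $b=h_y^{-1}ah_x$, and then concatenating an isotopy $\mathrm{id}\simeq b$ with the rel-$o(y,f)$ isotopy $bfb^{-1}\simeq f$, the paper simply chooses a continuous family of $n$-point sets $K_t$ interpolating $o(x,f)$ and $o(y,f)$, homeomorphisms $h_t$ (isotopic to the identity) with $h_t(K_t)=X_n$, and an isotopy $H_t$ rel $X_n$ between $h_xfh_x^{-1}$ and $h_yfh_y^{-1}$; then $F_t:=h_t^{-1}H_th_t$ is the required self-isotopy of $f$ and $\gamma(t):=h_t^{-1}(\alpha)$ for a suitable $\alpha\in X_n$ is the path of period-$n$ points, with $F_t^n(\gamma(t))=\gamma(t)$ a one-line check. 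Your route can be made to work but leaves more bookkeeping implicit (how the two isotopies are spliced into a single self-isotopy, and why the endpoint is $y$ rather than some other point of $o(y,f)$). You are right that the genuinely delicate step, common to both arguments, is arranging that the conjugating element in $\mathrm{MCG}(M,X_n)$ can be absorbed while keeping the auxiliary homeomorphisms isotopic to the identity; your proposed use of the evaluation fibration and the vanishing of $\pi_1\mathrm{Homeo}(M)$ under the standing hypotheses on $M$ is the correct place to locate this.
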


A direct consequence of Proposition \ref{thm1} is the following result.

\begin{corollary}\label{corollary}
	Let $f:\mathbb{D}^2\to\mathbb{D}^2$ be an orientation-preserving homeomorphism that fixes the boundary pointwise and let $x,y\in\text{Int}(\mathbb{D}^2)$ such that $x,y\in P_m(\mathbb{D}^2)$. Then $x\widesim{\text{\tiny SN}}y$ if and only if, for some $c\in B_m$, $\beta_{o_x}=c\cdot\beta_{o_y}\cdot c^{-1}$, where $\beta_{o_x}, \beta_{o_y}\in B_m$.
\end{corollary}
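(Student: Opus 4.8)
The plan is to deduce the corollary directly from Proposition \ref{thm1}, the only extra ingredient being the classical identification of the mapping class group of the punctured disc with a braid group. By Proposition \ref{thm1}, $x\widesim{\text{\tiny SN}}y$ holds if and only if $snt(x,f)=snt(y,f)$, so it suffices to show that this equality of strong Nielsen types is equivalent to the existence of $c\in B_m$ with $\beta_{o_x}=c\cdot\beta_{o_y}\cdot c^{-1}$.

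To carry this out I would first recall the Artin isomorphism $\mathrm{MCG}(\mathbb{D}^2,X_m)\cong B_m$, where homeomorphisms and isotopies are required to fix $\partial\mathbb{D}^2$ pointwise; this is precisely the dictionary between homeomorphisms relative to a finite invariant set and braids set up in Section \ref{s2}, under which the class of a homeomorphism $g$ preserving $X_m$ setwise corresponds to the braid traced out by $X_m$ along an isotopy from the identity to $g$ (such an isotopy exists and, by Alexander's trick, is unique up to homotopy rel $\partial\mathbb{D}^2$ --- in particular every self-isotopy of $\mathbb{D}^2$ rel boundary is automatically contractible, which is why the contractible-isotopy hypothesis of Proposition \ref{thm1} is vacuous here). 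Next I would unwind the definition of $snt$: choosing $h$ isotopic to the identity with $h(o(x,f))=X_m$, the homeomorphism $hfh^{-1}$ preserves $X_m$ setwise and its image under the Artin isomorphism is the orbit braid $\beta_{o_x}\in B_m$ of Section \ref{s2} --- the braid on $m$ strands obtained by following the $m$ points of $o(x,f)$ along a contractible isotopy from the identity to $f$, whose induced permutation is an $m$-cycle since $f$ cycles the orbit --- so that $snt(x,f)$ is by construction the conjugacy class of $\beta_{o_x}$ in $B_m$, and likewise $snt(y,f)$ is the conjugacy class of $\beta_{o_y}$. Since two elements of a group determine the same conjugacy class exactly when one is a conjugate of the other, $snt(x,f)=snt(y,f)$ if and only if $\beta_{o_x}=c\cdot\beta_{o_y}\cdot c^{-1}$ for some $c\in B_m$; combining this with Proposition \ref{thm1} yields the corollary.

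The only point requiring attention --- the main, and essentially bookkeeping, obstacle --- is verifying that the braid produced abstractly as the image of $[hfh^{-1}]$ under the Artin isomorphism genuinely agrees with the geometrically defined orbit braid $\beta_{o_x}$ of Section \ref{s2}, and that it depends on the auxiliary identification $h$ only up to conjugacy in $B_m$. Replacing $h$ by another $h'$ with $h'(o(x,f))=X_m$ conjugates $hfh^{-1}$ by the element $[h'h^{-1}]\in\mathrm{MCG}(\mathbb{D}^2,X_m)=B_m$ and correspondingly conjugates the orbit braid, so that $snt(x,f)$ is well defined as a conjugacy class; this is exactly the well-definedness already underlying the statement of Proposition \ref{thm1}, and no argument beyond it is needed.
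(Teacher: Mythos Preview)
Your proposal is correct and follows essentially the same route as the paper: you invoke Proposition~\ref{thm1} to reduce to $snt(x,f)=snt(y,f)$, then use the Artin identification $\mathrm{MCG}(\mathbb{D}^2,X_m;\partial\mathbb{D}^2)\cong B_m$ to translate the strong Nielsen type into the conjugacy class of the orbit braid $\beta_{o_x}$, exactly as the paper does in Section~\ref{s2}. The paper's exposition is slightly more discursive, pausing to record several equivalent formulations of the braid type (free isotopy classes of geometric braids, free homotopy classes of loops in $F_m(\mathbb{D}^2)/S_m$, conjugacy classes in $B_m$), but the logical skeleton is identical to yours.
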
 
Here, $\beta_{o_x}$ and $\beta_{o_y}$ are the $m$-braids in $\mathbb{D}^2\times[0,1]$ associated to $o(x,f)$ and $o(y,f)$ respectively, for a fixed isotopy $f_t:f\simeq id_{\mathbb{D}^2}$, as we will see in Section \ref{s2}.

Note that one should expect this result of Proposition \ref{thm1} as it has been stated in \cite{boy} by Boyland that one can show this equivalence between strong Nielsen classes and strong Nielsen types.

As we will see in Section \ref{s1}, there is also a definition that gives the strong Nielsen equivalence on periodic orbits uses the suspension flow, as described below. We recall that the mapping torus $M_f$ of a homeomorphism $f:M\to M $ is the quotient space ${M\times\mathbb{R}}/{\sim}$, with $(x,s+1)\sim (f(x),s)$, for every $x\in M$ and $s\in \mathbb{R}$.
To obtain the suspension flow $\psi_t$ on $M_f$, one takes the unit speed flow in the $\mathbb{R}$ direction on $M\times\mathbb{R}$ and then projects it to $M_f$. Let $p:M\times[0,1]\to M_f$ be the projection and $x\in M$, then $\gamma_x$ denotes the orbit of $p(x,0)$ under $\psi_t$. When $x$ is a periodic point, then $\gamma_x$ can be viewed as a simple closed curve in $M_f$. For $x,y\in P_n(f)$, the periodic orbits $o(x,f)$ and $o(y,f)$ are strong Nielsen equivalent, denoted by $o(x,f)\widesim{\text{\tiny SN}}o(y,f)$, if $\gamma_x$ is freely isotopic to $\gamma_y$ in $M_f$.
For the considered cases of $M$, Proposition \ref{thm1} is equivalent to the one where we replace periodic points $x\widesim{\text{\tiny SN}}y$ with periodic orbits $o(x,f)\widesim{\text{\tiny SN}}o(y,f)$, because in these cases all self-isotopies are contractible, see Remarks \ref{equiv} and \ref{extra}. Note that one can prove this result without asking for contractible self-isotopy. However, the absence of contractible-isotopy does not allow us to have for free the equivalent result for periodic orbit. Thus, for a more general $M$, one needs to also prove that the self-isotopy is contractible, in order to obtain the equivalent result for periodic orbits.  

It is important to notice that the strong Nielsen equivalence of periodic orbits of an orientation-preserving homeomorphism $f$ does not take into consideration their interaction with another possible invariant set $A$ of $f$. That is, two periodic orbits may be strong Nielsen equivalent, but when we take into consideration a given invariant set $A$ of $f$, then they may not be strong Nielsen equivalent. For this reason, we introduce and extend the definition of strong Nielsen equivalence of two periodic orbits to strong Nielsen equivalence, with respect to $A$, as follows: Given a compact, connected orientable surface $M$ we consider orientation-preserving homeomorphisms $f:M\to M$. 
Let $A\subset \text{Int}(M)$ an invariant $n$-point set of $f$ and $x,y\in P_m(f)$, such that $x\nin A$ and $y\nin A$. We say that the periodic orbits of $x$ and $y$ are strong Nielsen equivalent, with respect to $A$, denoted by $o(x,f)\widesim{\text{\tiny SN}}_A o(y,f)$, if $\gamma_x$ is freely isotopic to $\gamma_y$, while taking into consideration and keeping fixed the orbits $\gamma_{a_j}$ of $p(a_j,0)$ in $M_f$, for every $a_j\in A$. That is, the base points of $\gamma_x$ and $\gamma_y$ are allowed to vary through the isotopy, while the closed orbits of the points of $A$ are fixed. Note that it is possible to have $\gamma_{a_j}=\gamma_{a_k}$, for $i\neq j$.

For our second main result, we focus in the case where $M=\mathbb{D}^2$ and we give a necessary a sufficient condition for two periodic orbits to be strong Nielsen equivalent, with respect to a given invariant $n$-point set $A$ of $f$. 
More precisely, let $f:(\mathbb{D}^2, A; \partial \mathbb{D}^2)\to (\mathbb{D}^2, A; \partial \mathbb{D}^2)$ be an orientation-preserving homeomorphism of $\mathbb{D}^2$ that fixes the boundary pointwise and leaves invariant an $n$-point set $A\subset \text{Int}(\mathbb{D}^2)$. We prove that the problem of distinguishing strong Nielsen classes of periodic orbits, with respect to an invariant set $A$, is equivalent to studying the conjugacy problem in a braid group. Let $F_n(\mathbb{D}^2)$ be the $n^{th}$ configuration space of $\mathbb{D}^2$. We recall that one way of defining the braid group $B_n$ is as the fundamental group of the quotient of $F_n(\mathbb{D}^2)$ by the symmetric group of degree $n$, $S_n$. That is, $B_n=\pi_1(F_n(\Sigma)/S_n)$. Similarly, $B_{n,m}=\pi_1(F_{n+m}(\mathbb{D}^2)/(S_n\times S_m))$ and 
$B_m(\mathbb{D}^2\setminus \{n\ \text{points}\})=\pi_1(F_m(\mathbb{D}^2\setminus \{n\ \text{points}\})/S_m)$. As we will see in detail in Section \ref{s2}, one can associate to a periodic orbit $o(x,f)$ together with the set $A$ a braid element $\beta_x\in B_{n,m}$. Using the well-known decomposition $B_{n,m}\cong B_m(\mathbb{D}^2\setminus \{n\ \text{points}\}) \rtimes B_n $, we provide an algebraic characterization of strong Nielsen equivalent classes of periodic orbits, with respect to the set $A$. Let $\phi_{\beta_A}$ denote the endomorphism of $B_m(\mathbb{D}^2\setminus \{n\ \text{points}\})$, induced by $\beta_A\in B_n$, which describes the action of $B_n$ on $B_m(\mathbb{D}^2\setminus \{n\ \text{points}\})$.

\begin{theorem}\label{thm2}
	Let $f:(\mathbb{D}^2, A; \partial \mathbb{D}^2)\to (\mathbb{D}^2, A; \partial \mathbb{D}^2)$ be an orientation-preserving homeomorphism, where $A\subset\text{Int}(\mathbb{D}^2)$ is an $f$-invariant $n$-point set. Let $x,y\in \text{Int}(\mathbb{D}^2)\setminus A$ such that $x,y\in P_m(f)$. The following are equivalent:
	\begin{enumerate}[(i)]
\item $o(x,f) \widesim{\text{\tiny SN}}_A o(y,f)$.
\item $\beta_x=c\cdot \beta_y\cdot c^{-1}$, for $\beta_x, \beta_y\in B_{n,m}\subset B_{n+m}$ and $c\in B_m(\mathbb{D}^2\setminus \{n\ \text{points}\})$.
\item $\beta_{o_x}=\phi_{\beta_A}(c)\cdot \beta_{o_y}\cdot c^{-1},\ \text{for}\ \beta_{o_x}, \beta_{o_y}, c\in B_m(\mathbb{D}^2\setminus \{n\ \text{points}\})$ and $\beta_A\in B_n$.
	\end{enumerate}
	
\end{theorem}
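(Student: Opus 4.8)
The plan is to prove the three-way equivalence by establishing $(i)\Leftrightarrow(ii)$ and $(ii)\Leftrightarrow(iii)$ separately, since each of these transitions isolates one conceptual idea: the first translates the topological condition on the suspension flow into a braid conjugacy statement, and the second is a purely algebraic translation using the semidirect product decomposition $B_{n,m}\cong B_m(\mathbb{D}^2\setminus\{n\ \text{points}\})\rtimes B_n$.

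For $(i)\Leftrightarrow(ii)$, I would first recall from Section~\ref{s2} the construction that assigns to the periodic orbit $o(x,f)$ together with the invariant set $A$ the braid $\beta_x\in B_{n,m}$: fix an isotopy $f_t\colon f\simeq\mathrm{id}_{\mathbb{D}^2}$, follow $A\cup o(x,f)$ along the isotopy to obtain an $(n+m)$-strand geometric braid, whose induced permutation preserves the block of $n$ strands coming from $A$ and permutes cyclically the $m$ strands coming from $o(x,f)$ — hence it lies in $B_{n,m}$. The definition of $o(x,f)\widesim{\text{\tiny SN}}_A o(y,f)$ asks for a free isotopy in the mapping torus $\mathbb{D}^2_f$ carrying $\gamma_x$ to $\gamma_y$ while keeping the closed orbits $\gamma_{a_j}$ fixed setwise. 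The key point is that such an isotopy of closed curves in the mapping torus lifts, via the standard correspondence between isotopies in $\mathbb{D}^2_f$ and paths of homeomorphisms (this is where the Alexander trick and the contractibility of self-isotopies of $\mathbb{D}^2$ enter, as in the proof of Proposition~\ref{thm1}), to a conjugation of the associated braid; the constraint that the $\gamma_{a_j}$ are fixed means precisely that the conjugating element must not move the $A$-strands, i.e.\ it lies in the subgroup $B_m(\mathbb{D}^2\setminus\{n\ \text{points}\})$ of $B_{n,m}$ rather than all of $B_{n,m}$ or $B_{n+m}$. Conversely, a conjugation by $c\in B_m(\mathbb{D}^2\setminus\{n\ \text{points}\})$ is realized by an ambient isotopy of $\mathbb{D}^2$ supported away from $A$, which descends to the required free isotopy in the mapping torus. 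I would phrase this carefully using the correspondence already set up for Corollary~\ref{corollary} and Proposition~\ref{thm1}, the only new ingredient being the bookkeeping of the $A$-strands.

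For $(ii)\Leftrightarrow(iii)$, I would use the decomposition $B_{n,m}\cong B_m(\mathbb{D}^2\setminus\{n\ \text{points}\})\rtimes B_n$ explicitly: write $\beta_x=\beta_{o_x}\cdot\beta_A$ and $\beta_y=\beta_{o_y}\cdot\beta_A$, where the $B_n$-component is in both cases the braid $\beta_A\in B_n$ associated to $A$ alone (both orbits live over the same homeomorphism $f$ and the same invariant set, so the image in $B_n$ under the strand-forgetting map $B_{n,m}\to B_n$ is the same $\beta_A$; this is the analogue of the fact used in \cite{guaschi} that $\beta_u,\beta_v$ have the same image in $B_n$). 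Now for $c\in B_m(\mathbb{D}^2\setminus\{n\ \text{points}\})\subset B_{n,m}$ one computes in the semidirect product
\[
c\cdot\beta_x\cdot c^{-1}=c\cdot\beta_{o_x}\cdot\beta_A\cdot c^{-1}=\bigl(c\cdot\beta_{o_x}\cdot\phi_{\beta_A}(c)^{-1}\bigr)\cdot\beta_A,
\]
using that $\beta_A c^{-1}\beta_A^{-1}=\phi_{\beta_A}(c^{-1})=\phi_{\beta_A}(c)^{-1}$ by definition of the action $\phi_{\beta_A}$. Hence the equation $\beta_x=c\cdot\beta_y\cdot c^{-1}$ holds in $B_{n,m}$ if and only if its $B_n$-components agree (automatic, both equal $\beta_A$) and its $B_m(\mathbb{D}^2\setminus\{n\ \text{points}\})$-components agree, i.e.\ $\beta_{o_x}=c\cdot\beta_{o_y}\cdot\phi_{\beta_A}(c)^{-1}$, which is exactly $(iii)$. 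This direction is a short formal computation once the decomposition and the compatibility of the $B_n$-components are in place.

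The main obstacle I expect is in the $(i)\Leftrightarrow(ii)$ step: making rigorous the claim that a free isotopy of $\gamma_x$ to $\gamma_y$ in the mapping torus fixing the $A$-orbits corresponds exactly to braid conjugation by an element of the subgroup $B_m(\mathbb{D}^2\setminus\{n\ \text{points}\})$ — as opposed to a larger or smaller subgroup — requires care with basepoints, with the fact that the isotopy of curves need not be ambient a priori (one must promote it using the isotopy extension theorem and the contractibility of self-isotopies established for Proposition~\ref{thm1}), and with checking that "keeping the $\gamma_{a_j}$ fixed setwise" translates to "$c$ fixes the $n$ marked points", including the subtlety that some $\gamma_{a_j}$ may coincide. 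Once this topological dictionary is nailed down, the rest of the proof is bookkeeping and the algebraic identity above.
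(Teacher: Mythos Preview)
Your overall architecture matches the paper's: establish $(i)\Leftrightarrow(ii)$ topologically in the mapping torus, then $(ii)\Leftrightarrow(iii)$ by a semidirect-product computation. Two points, however, need correction or comment.

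\medskip

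\textbf{On $(ii)\Leftrightarrow(iii)$.} You have reversed the paper's conventions. The paper fixes the decomposition $\beta_x=\beta_A\cdot\beta_{o_x}$ (the $B_n$-factor on the \emph{left}) and defines $\phi_{\beta_A}(\gamma)=\beta_A^{-1}\gamma\,\beta_A$; you instead write $\beta_x=\beta_{o_x}\cdot\beta_A$ and use $\beta_A c^{-1}\beta_A^{-1}=\phi_{\beta_A}(c^{-1})$, which is the opposite convention. With the paper's conventions the calculation reads
\[
c\,(\beta_A\beta_{o_y})\,c^{-1}=\beta_A\,(\beta_A^{-1}c\,\beta_A)\,\beta_{o_y}\,c^{-1}=\beta_A\cdot\phi_{\beta_A}(c)\cdot\beta_{o_y}\cdot c^{-1},
\]
yielding $\beta_{o_x}=\phi_{\beta_A}(c)\,\beta_{o_y}\,c^{-1}$, which is $(iii)$. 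Your formula $\beta_{o_x}=c\,\beta_{o_y}\,\phi_{\beta_A}(c)^{-1}$ is \emph{not} $(iii)$ under the paper's definition of $\phi_{\beta_A}$, so the clause ``which is exactly $(iii)$'' is false as written. The repair is mechanical once you align conventions with those already fixed in the paper.

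\medskip

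\textbf{On $(i)\Leftrightarrow(ii)$.} The paper's argument is more direct than what you propose. Rather than lifting the free isotopy in $\mathbb{D}^2_f$ to a path of homeomorphisms and invoking contractibility of self-isotopies (as in Proposition~\ref{thm1}), the paper observes that, since $f\simeq\mathrm{id}$, the mapping torus is identified with the solid torus $\mathbb{D}^2\times\mathbb{S}^1$, and then invokes the classical fact that two braids have isotopic closures in the solid torus if and only if they are conjugate. The curves $\gamma_x\cup\bigcup_j\gamma_{a_j}$ and $\gamma_y\cup\bigcup_j\gamma_{a_j}$ are precisely the closures of $\beta_x=\beta_A\beta_{o_x}$ and $\beta_y=\beta_A\beta_{o_y}$ in $B_{n,m}$, and ``free isotopy of $\gamma_x$ to $\gamma_y$ keeping the $\gamma_{a_j}$ fixed'' translates to conjugation by an element whose $A$-strands are straight, i.e.\ $c\in B_m(\mathbb{D}^2\setminus\{n\ \text{points}\})$. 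Your route via the isotopy extension theorem and contractibility could be made to work, but the obstacle you anticipate (promoting a curve isotopy to an ambient one, basepoint bookkeeping) is sidestepped entirely by the closed-braid/conjugacy dictionary.
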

Roughly speaking, the braids $\beta_x, \beta_y$ correspond to the invariant sets $A\cup o(x,f)$ and $A\cup o(y,f)$ respectively, the braids $\beta_{o_x}, \beta_{o_y}$ correspond to the invariant sets $o(x,f)$ and $o(y,f)$ respectively and $\beta_A$ to the invariant set $A$, with respect to the isotopy induced by the suspension flow on the mapping torus $\mathbb{D}^2_f$, as we will see in Section \ref{s2}.\\

\textbf{Application:} We will use this result to show that the forced periodic orbits one obtains from the trace formula by Jiang--Zheng in \cite{boju} belong to different strong Nielsen classes, with respect the given invariant set.\\

The structure of the article is as follows. In Section \ref{s1}, we provide necessary definitions and basic results from Nielsen and braid theory. Moreover, we present how periodic orbits are associated to braid elements. In Section \ref{s2}, we define the strong Nielsen type, commonly called braid type. We focus in the case of $\mathbb{D}^2$, where there are several equivalent ways of defining it, and we conclude by proving Proposition \ref{thm1} and Theorem \ref{thm2}. We explain, in Section \ref{s3}, how from our result one can obtain Theorem 1 of \cite{guaschi}. Thus, Theorem \ref{thm2} can be seen as a generalization of this result to the case of periodic points. Additionally, we make some important remarks comparing periodic Nielsen points with strong Nielsen periodic points, taking into consideration a given invariant $n$-point set. The last section, Section \ref{s4}, is devoted to an application of our result to the Lefschetz trace formula given by Jiang--Zheng in \cite{boju}. We show that the forced braids that they obtain, which could correspond to forced periodic orbits, all belong to different strong Nielsen classes, with respect to a given invariant set. Thus, given a periodic point on $\mathbb{D}^2$, combining their result with ours, one can have an algorithm with output forced periodic orbits, which are not strong Nielsen equivalent, when taking into consideration the given invariant set.

\section{Preliminaries}\label{s1}

In what follows, we assume that $M$ is a compact, connected, orientable surface and that any map $f:M\to M$ is an orientation-preserving homeomorphism.
In this section we give the necessary definitions and theory on periodic points as well as their relation with braids. Moreover, we review the relation between braid group and mapping class group.  

We begin with fixing some notation and giving two equivalent definitions of strong Nielsen equivalence of periodic points. For further details in the theory we direct the reader to \cite{asimov} and \cite{boy}.
A periodic point $x$ is a point for which $f^n(x)=x$ for some $n>0$. The least such $n$ is called the period of the periodic point. Let $P_n(f)$ be the set of all periodic points with period $n$ and let $o(x,f)$ be the orbit of a periodic point $x$. The strong Nielsen equivalence is an equivalence relation on periodic points of a surface homeomorphism. There are two ways to define the strong Nielsen equivalence; one that uses paths in the surface and another that uses the suspension flow. 
     
We recall a couple of notions that we will need for the following definition. Let us denote by $\text{Homeo}(M)$ the group of orientation-preserving homeomorphisms of $M$. An isotopy $f_t:f_0\simeq f_1$ is said to be a deformation of another isotopy $h_t:f_0\simeq f_1$ if the corresponding paths in $\text{Homeo}(M)$ are homotopic with fixed endpoints. A self-isotopy $f_t:f\simeq f$ is called contractible if it is a deformation of the trivial isotopy, which means that the corresponding closed loop in $\text{Homeo}(M)$ is null-homotopic.
     
 \begin{definition}\label{defsne}
Let $x,y\in P_n(f)$. We say that $x,y$ are strong Nielsen equivalent, denoted $x\widesim{\text{\tiny SN}}y$, if there exists a contractible isotopy $f_t:f\simeq f$ and a path $\gamma:[0,1]\to M$ such that $\gamma(0)=x$, $\gamma(1)=y$ and $\gamma(t)\in P_n(f_t)$, for all $t\in [0,1]$.  
 \end{definition}

Note that the strong Nielsen equivalence on periodic points is a stronger equivalence relation compared to the periodic Nielsen equivalence. Recall that given $x,y\in P_n(f)$, $x$ is periodic Nielsen equivalent to $y$ if there exists a path $\gamma:[0,1]\to M$ with $\gamma(0)=x$, $\gamma(1)=y$ and $f^n(\gamma)\sim \gamma$ with fixed endpoints.

This definition gives an equivalence relation on periodic points, while the following one, which uses the suspension flow, defines an equivalence relation on periodic orbits.

Let us recall the definition of a mapping torus of a homeomorphism and of the suspension flow on it.

\begin{definition}\label{flow}
The mapping torus $M_f$ of a homeomorphism $f:M\to M $ is the quotient space ${M\times[0,1]}/{\sim}$, with $(x,1)\sim (f(x),0)$.
However, it is often more convenient to consider $M_f$, as the the quotient space ${M\times\mathbb{R}}/{\sim}$, with $(x,s+1)\sim (f(x),s)$, for every $x\in M$ and $s\in \mathbb{R}$.
To obtain the suspension flow $\psi_t$ on $M_f$, one takes the unit speed flow in the $\mathbb{R}$ direction on $M\times\mathbb{R}$ and then projects it to $M_f$. 
\end{definition} 

Let $p:M\times\mathbb{R}\to M_f$ be the projection and, for any $x\in M$, let $\gamma_x$ denote the orbit of $p(x,0)$ under $\psi_t$.
Notice that when $x$ is a periodic point, then $\gamma_x$ can be viewed as a simple closed curve in $M_f$. In other words, for a periodic point $x\in M$, of period $n\in\mathbb{N}$, $\gamma_x$ is defined to be a loop in $M_f$, given by $\gamma_x(s)=(x, ns)\in M_f$, for $s\in [0, 1]$.

\begin{definition}\label{deforbit}
Let $x,y\in P_n(f)$. The periodic orbits $o(x,f)$ and $o(y,f)$ are strong Nielsen equivalent, denoted $o(x,f)\widesim{\text{\tiny SN}}o(y,f)$, if $\gamma_x$ is freely isotopic to $\gamma_y$ in $M_f$.
\end{definition}

In terms of suspension flow, two periodic orbits $o(x,f)$ and $o(y,f)$ are periodic Nielsen equivalent if $\gamma_x$ is freely homotopic to $\gamma_y$ in $M_f$.

 In Figure \ref{fig}, we illustrate an example given in \cite{boy}, which presents 
the suspension of a map $f$ isotopic to the identity on
the disc with two holes, showing a pair of orbits, of period two, that are
periodic Nielsen equivalent but not strong Nielsen equivalent. It holds that the orbit $o(x_1, f)$ on the left is periodic Nielsen equivalent to the orbit $o(x_1,f)$ on the right, because in homotopy it is allowed to pull strings of a closed loop through
itself. Clearly, this is not allowed in an isotopy.

\begin{figure}[h]
	\centering
	\includegraphics[width=0.8\textwidth]{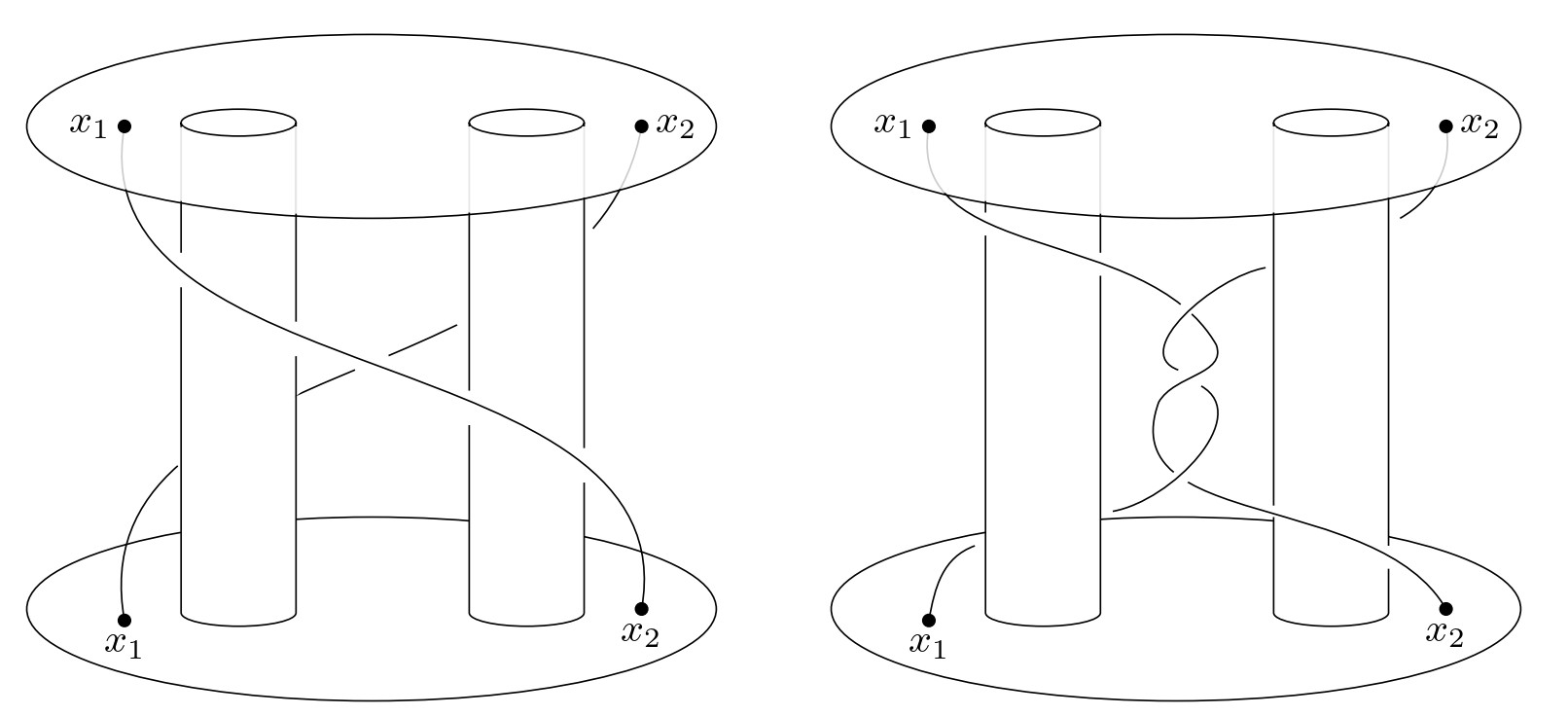}
	\caption{Two orbits of period two that are periodic Nielsen equivalent but not strong Nielsen equivalent.}
	\label{fig}
\end{figure}
 
 The two presented definitions of strong Nielsen equivalence are connected due to the following result, given in \cite{boy}, where part of the proof is in \cite{asimov}.
  
 \begin{proposition}\label{orbit}
Two periodic orbits are strong Nielsen equivalent if and only if there are points from each of the orbits that are strong Nielsen equivalent.
 \end{proposition}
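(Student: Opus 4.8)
The plan is to prove the two implications by translating back and forth between the two models of strong Nielsen equivalence: Definition \ref{defsne}, built from a contractible self-isotopy $f_t:f\simeq f$ together with a path of period-$n$ points, and Definition \ref{deforbit}, built from a free isotopy of the suspension curves inside the mapping torus $M_f$. The bridge is the fibration $\pi:M_f\to S^1=\mathbb{R}/\mathbb{Z}$ with fibre $M$ and monodromy $f$, together with the observation that for $x\in P_n(f)$ the curve $\gamma_x$ is transverse to $\pi$ and meets the fibre $\pi^{-1}(0)$ in exactly the $n$ points of $o(x,f)$.

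For the direction ``if $x\in o(x,f)$ and $y\in o(y,f)$ are strong Nielsen equivalent as points, then $o(x,f)\widesim{\text{\tiny SN}}o(y,f)$'', I would start from a contractible self-isotopy $f_t$ and a path $\gamma$ with $\gamma(0)=x$, $\gamma(1)=y$ and $\gamma(t)\in P_n(f_t)$. An isotopy of homeomorphisms induces a continuous family of homeomorphisms between mapping tori, $\Psi_t:M_{f_0}\to M_{f_t}$ with $\Psi_0=\mathrm{id}$, obtained by shearing $M\times\mathbb{R}$ so that the gluing deforms from $f_0$ to $f_t$; then $c_t:=\Psi_t^{-1}(\gamma_{\gamma(t)})$ is a free isotopy of embedded circles in $M_f$ with $c_0=\gamma_x$ and $c_1=\Psi_1^{-1}(\gamma_y)$, where $\Psi_1:M_f\to M_f$ is the self-homeomorphism determined by the loop $(f_t)$ in $\text{Homeo}(M)$. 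Since this construction is natural in the isotopy, a deformation of $(f_t)$ to the trivial isotopy yields an isotopy from $\Psi_1$ to $\mathrm{id}_{M_f}$, so $c_1$ is freely isotopic to $\gamma_y$, and concatenating the two free isotopies shows $\gamma_x$ is freely isotopic to $\gamma_y$.

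For the converse I would start from a free isotopy $H:S^1\times[0,1]\to M_f$ with $H_0=\gamma_x$, $H_1=\gamma_y$ and each $H_u$ an embedded circle, and put it into normal form with respect to $\pi$. Since $\gamma_x$ and $\gamma_y$ are already transverse to the fibres, I would perturb $H$ rel $\{0,1\}$ so that every $H_u$ is transverse to $\pi$, making $\pi\circ H_u$ an orientation-preserving self-covering of $S^1$ of degree $n$ (constant in $u$), and then reparametrise the source circles, continuously in $u$ and trivially at $u=0,1$, so that $\pi\circ H_u$ is the standard degree-$n$ map for all $u$. Fibrewise isotopy extension for $\pi$ then lifts $u\mapsto H_u$ to a path $\Phi_u$ in the group of $\pi$-preserving homeomorphisms of $M_f$ with $\Phi_0=\mathrm{id}$ and $\Phi_u\circ\gamma_x=H_u$. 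Writing the restriction of $\Phi_u$ to $M\times\{s\}$ as $\psi_{u,s}:M\to M$, so that $\psi_{0,s}=\mathrm{id}$ and $\psi_{u,1}=f^{-1}\psi_{u,0}f$, one reads off from $H_1=\gamma_y$ that $\psi_{1,s}$ maps $o(x,f)$ onto $o(y,f)$ by a bijection independent of $s$ that intertwines the first-return maps. Then $f_t:=\psi_{2t,0}\,f\,\psi_{2t,0}^{-1}$ for $t\in[0,\tfrac12]$, followed by $f_t:=f\,\psi_{1,2-2t}\,\psi_{1,0}^{-1}$ for $t\in[\tfrac12,1]$, is a self-isotopy of $f$, and the path $\delta$ equal to $\psi_{2t,0}(x)$ on $[0,\tfrac12]$ and constantly $\psi_{1,0}(x)\in o(y,f)$ on $[\tfrac12,1]$ satisfies $\delta(t)\in P_n(f_t)$, $\delta(0)=x\in o(x,f)$ and $\delta(1)\in o(y,f)$; by Definition \ref{defsne} this yields the required pair of strong Nielsen equivalent points, once the self-isotopy just built is seen to be contractible.

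I expect the main obstacle to be this normal form together with the fibrewise isotopy extension in the converse: one must deform the free isotopy of circles to one that stays transverse to $\pi$, matches the standard projection at every stage and is cut out by an ambient isotopy of $M_f$ respecting $\pi$, all rel the endpoints $\gamma_x,\gamma_y$, while keeping track of how the intersections with $\pi^{-1}(0)$ encode the orbits $o(x,f)$ and $o(y,f)$ and their cyclic orderings. The remaining point, contractibility of the self-isotopy produced in the converse, is automatic whenever the identity component $\text{Homeo}_0(M)$ is contractible (in particular in all the cases relevant here) and in general needs an extra check that the loop $(f_t)$ above is null-homotopic, which one performs by filling it with the two-parameter family $\psi_{u,s}$ supplied by $\Phi_u$. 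Combining the two implications gives the statement.
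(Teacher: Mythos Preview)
The paper does not give its own proof of this proposition: it is quoted verbatim from Boyland \cite{boy}, with the remark that part of the argument goes back to Asimov--Franks \cite{asimov}. So there is no in-house proof to compare your proposal against.

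That said, your outline is the natural one and matches in spirit what one finds in those references: pass between the path-of-periodic-points picture and the suspension picture by viewing $M_f$ as an $M$-bundle over $S^1$ and exploiting that $\gamma_x$ is a degree-$n$ section. Your forward direction is clean. In the converse, the two genuinely nontrivial steps are exactly the ones you flag: (a) deforming the free isotopy $H_u$ of embedded circles, rel endpoints, to one that is everywhere transverse to $\pi$ with $\pi\circ H_u$ the standard degree-$n$ map, and (b) the fibrewise isotopy extension that produces the family $\Phi_u$. Both are standard but not one-liners; (a) in particular needs care because transversality to the fibres is an open condition on each $H_u$ but you must achieve it simultaneously for all $u$ while keeping $H_0,H_1$ fixed, and you should say why the degree cannot change along the way.

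The one place where your sketch is thin is the contractibility of the self-isotopy you build in the converse. Your suggestion to ``fill it with the two-parameter family $\psi_{u,s}$'' is the right instinct, but as written it is not a proof: $\psi_{u,s}$ lives on the square $[0,1]^2$ and satisfies $\psi_{u,1}=f^{-1}\psi_{u,0}f$, and you need to explain concretely how this yields a null-homotopy in $\mathrm{Homeo}(M)$ of the specific loop $t\mapsto f_t$ you wrote down (which is pieced together from conjugation by $\psi_{\cdot,0}$ on $[0,\tfrac12]$ and a different expression on $[\tfrac12,1]$). In the cases the paper actually uses (negative Euler characteristic, or $\mathbb{D}^2$ rel $\partial$), $\mathrm{Homeo}_0(M)$ is contractible and the issue evaporates, as you note; for a general $M$ you would need to spell this out.
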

  
 \begin{remark}
 	
 	Note that the classical definition of the Nielsen class is defined only when $n=1$, that is for fixed points, and it only requires that $f(\gamma(t))\sim \gamma(t)$ relative to the endpoints.
 \end{remark}

\begin{remark}\label{same}
The notion of strong Nielsen equivalence, given in Definition \ref{defsne} and the notion of periodic Nielsen equivalence, given below Definition \ref{defsne}, coincide for fixed points, see (\cite{jiang}, Theorem 2.13). That is, for fixed points $x, y$, then $x$ is Nielsen equivalent to $y$ if and only if $x$ is strong Nielsen equivalent to $y$.
\end{remark}
\begin{remark}\label{equiv}
The requirement of a contractible self-isotopy at the definition of strong Nielsen equivalence using paths is necessary to ensure that the definition is equivalent to the one using the suspension flow. Additionally, one would like strong Nielsen equivalence to imply periodic Nielsen equivalence for periodic points, which holds when we have contractible self-isotopy. For further details we refer the reader to (\cite{boy}, 2.4. Remarks).
\end{remark}

\begin{remark}\label{extra}
The reason we assume that $M$ has negative Euler characteristic or that when $M=\mathbb{D}^2$ we restrict our attention to the class of homeomorphisms that preserve pointwise the boundary, is because in both of these cases all self-isotopies of $M$ are contractible; and thus we directly have that the two definitions of strong Nielsen equivalence are equivalent. However, one can define the strong Nielsen equivalence in a more general context.
\end{remark}

We review now the relation between periodic orbits and braids, as well as the relation between mapping classes and braids. An interesting aspect of braid groups is that they can be defined from several viewpoints, such as equivalence classes of geometric braids, as the fundamental group of configuration spaces, as we saw in the introduction, and as trajectories of non-colliding particles. Moreover, they are closely related to mapping class groups. For a more detailed description of these different approaches, we refer the reader to \cite{guaschi2013survey}.

Our main focus is on orientation-preserving homeomorphisms $f:M\to M$ that are isotopic to the identity map $\text{id}_{M}$. Let us fix an isotopy $f_t:f\simeq\text{id}_M$ and let $A$ be an invariant $n$-point set of $f$ in $\text{Int}(M)$. The subset $\beta(A, f_t):=\{(f_t(A), t),\ \text{for}\ t\in[0, 1]\}$ of $M\times [0,1]$ is a geometric braid of $n$-strands, which is just a collection of $n$ strands with basepoints in $A$. For simplicity, we denote it by $\beta_A$, and we refer to it as the $n$-braid associated to $A$. The strands of $\beta_A$ appear naturally by following $A$ under the isotopy. More precisely, each strand of $\beta_A$ connects two points $(a_i,0)$ and $(f(a_i), 1)$, for $1\leq i\leq n$ and $a_i\in A$.
 Note that there is a natural map that associates a permutation $s(\beta_A)\in S_n$, where $S_n$ is the symmetric group of degree $n$, to an $n$-braid $\beta_A$, that corresponds to the permutation that occurs to the points in $A$ under the $n$-braid $\beta_A$. In the particular case where $A$ is a periodic orbit of a periodic point of period $n$, the $n$-braid associated to $A$ corresponds to an $n$-cycle in $S_n$. In the case of $\mathbb{D}^2$, the group formed by the isotopy classes of geometric $n$-braids is equivalent to the Artin braid group $B_n$. We recall the classical presentation of the Artin braid group $B_n$:

$$B_n=\langle \sigma_1, \dots, \sigma_{n-1}\ |\ \sigma_i\sigma_j=\sigma_j\sigma_i,\ \text{for} \ |i-j|>1,\ \sigma_i\sigma_{i+1}\sigma_i = \sigma_{i+1}\sigma_i\sigma_{i+1},\ \text{for all} \ 1\leq i\leq n-2 \rangle.$$

The generator $\sigma_i$ can be seen geometrically as the braid with a single positive crossing of the $i^{th}$ strand with the $(i+1)^{st}$ strand, while all other strands remain vertical. We will investigate the relation between braids and periodic orbits of the 2-disc further in Section \ref{s2}.

 We focus now in the case when $M$ is the 2-disc, since this is the main interest of this work, and we present the relation between mapping classes and braids.
 The collection of isotopy classes of orientation preserving homeomorphisms on $\mathbb{D}^2$ that fix the boundary $\partial \mathbb{D}^2$ pointwise, with the operation of composition is called the mapping class group of $\mathbb{D}^2$ and is denoted by MCG$(\mathbb{D}^2; \partial \mathbb{D}^2)$. Recall that, due to Alexander's trick, any orientation preserving homeomorphism of $\mathbb{D}^2$ that fixes the boundary pointwise is isotopic to the identity map. As a result it follows that MCG$(\mathbb{D}^2; \partial\mathbb{D}^2)$ is trivial. If in addition we consider isotopy classes, relative to an $n$-point set $A\subset \text{Int}(\mathbb{D}^2)$, which means that both all homeomorphisms and isotopies must leave $A$ invariant, then the corresponding mapping class group is denoted by MCG$(\mathbb{D}^2, A; \partial \mathbb{D}^2)$. It is well-known that the group MCG$(\mathbb{D}^2, A; \partial \mathbb{D}^2)$ can be identified with Artin's braid group on $n$-strands $B_n$. That is, $$\text{MCG}(\mathbb{D}^2, A;\partial\mathbb{D}^2)\cong B_n.$$ One can see that they are indeed naturally related in the following way. Let $\beta\in B_n$ a braid where the points in $A$ play the role of the endpoints of $\beta$. One can extend the velocity vector ﬁeld of the $n$-strands to $\mathbb{D}^2\times[0,1]$ and a desired isotopy is generated by this vector ﬁeld. Intuitively, one can
 think of sliding $\mathbb{D}^2$ down the braid to obtain an isotopy. On the other hand, given $f:\mathbb{D}^2\to \mathbb{D}^2$ with $f(A)=A$, one can associate a braid $\beta\in B_n$ with $A$ by fixing an isotopy $f_t:f\simeq\text{id}_{\mathbb{D}^2}$. The strands of the braid $\beta$ appear naturally following $A$ under the isotopy $f_t$. For more details see \cite{birman}.

For completion let us conclude this section reviewing the Nielsen--Thurston classification \cite{classification}, \cite{thurston} and showing its connection with braids.

\begin{theorem}
For every homeomorphism $f: (M,A)\to (M, A)$ of a compact surface $M$, with $M\setminus A$ having negative Euler characteristic, there exists a homeomorphism $\phi$, isotopic to $f$ relative to $A$, the Thurston representative of the isotopy class, that satisfies one of the following:
\begin{enumerate}[(i)]
\item $\phi$ is finite order, i.e. there exists $k\in \mathbb{N}$ such that $\phi^k=\text{id}$.
\item $\phi$ is pseudo-Anosov, i.e it preserves a transverse pair of measured singular foliations, expanding the measure uniformly along the leaves of one foliation and contracting it uniformly, by the same factor, along the leaves of the other.
\item $\phi$ is reducible, i.e. there exists a collection of pairwise disjoint simple closed curves $\gamma=\{\gamma_1,\dots,\gamma_k\}$ in $\text{Int}(M)\setminus A$, called reducing curves for $\phi$, such that $\phi(\gamma)=\gamma$. Moreover, $\gamma$ has a $\phi$-invariant open tubular neighborhood $U$, which does not intersect with the set $A$, such that each component of $M\setminus (U\cup A)$ has negative Euler characteristic and the restriction of an appropriate iterate of $\phi$ to each component of $M\setminus U$ is either finite order or pseudo-Anosov relative to $(M\setminus U)\cap A$. 
\end{enumerate}
\end{theorem}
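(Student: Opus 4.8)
The statement above is the classical Nielsen--Thurston trichotomy, which we do not reprove here; we only outline the strategy of Thurston's original argument (see \cite{thurston} and \cite{classification}) and point to where the substantive work lies. The plan is to linearize the classification by letting the mapping class act on a compactification of Teichmüller space. Put $S=\text{Int}(M)\setminus A$; since $\chi(M\setminus A)<0$, the surface $S$ has finite type and carries complete finite-area hyperbolic structures, and the isotopy class $[f]$ (relative to $A$, and to $\partial M$ when present) acts on the Teichmüller space $\mathcal{T}(S)$. Thurston's compactification identifies $\overline{\mathcal{T}(S)}=\mathcal{T}(S)\cup\mathcal{PMF}(S)$ with a closed Euclidean ball on which this action extends continuously, so by the Brouwer fixed-point theorem it has a fixed point $\xi$; the three alternatives of the theorem are then read off from the position of $\xi$.

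If $\xi$ lies in the open part $\mathcal{T}(S)$, then $[f]$ fixes a hyperbolic structure, so some $\phi\simeq f$ is an isometry of it; since a finite-type hyperbolic surface has finite isometry group, $\phi^{k}=\mathrm{id}$ for some $k$, which is alternative (i) (equivalently, this is Nielsen realization for finite cyclic subgroups). Otherwise $\xi=[\mathcal{F}]\in\mathcal{PMF}(S)$, i.e.\ $f_{\ast}\mathcal{F}=\lambda\mathcal{F}$ as measured foliations for some $\lambda>0$, and I would split according to whether $\mathcal{F}$ is \emph{arational} (every leaf dense; equivalently no closed leaf and no saddle connection between singularities). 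If $\mathcal{F}$ is arational, then either $\lambda=1$, in which case $[f]$ is forced to have finite order and we are back in case (i), or $\lambda\ne1$; in the latter case, applying the same fixed-point argument to $[f^{-1}]$ yields a transverse partner — an arational foliation $\mathcal{G}$ with $[\mathcal{G}]\ne[\mathcal{F}]$, with $f_{\ast}\mathcal{G}=\lambda^{-1}\mathcal{G}$ (using invariance of the geometric intersection pairing), and with $\mathcal{F},\mathcal{G}$ jointly filling — so that the data $(\mathcal{F},\mathcal{G},\lambda)$, after normalizing $\lambda>1$, determines a pseudo-Anosov representative $\phi\simeq f$ with unstable foliation $\mathcal{F}$ and stable foliation $\mathcal{G}$, which is alternative (ii). (Bers' minimal-dilatation argument in the isotopy class gives an alternative route to this same elliptic-versus-Teichmüller dichotomy.)

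Finally, if the invariant projective foliation $\mathcal{F}$ is \emph{not} arational, its non-dense leaves and minimal components cut out a canonical, nonempty, $f$-invariant system $\gamma=\{\gamma_{1},\dots,\gamma_{k}\}$ of pairwise disjoint essential simple closed curves in $\text{Int}(M)\setminus A$ — concretely one may take the canonical reduction system of $[f]$ — and one arranges $\phi(\gamma)=\gamma$ for a suitable $\phi\simeq f$; choosing a small $\phi$-invariant tubular neighborhood $U$ of $\gamma$ disjoint from $A$, each component of $M\setminus(U\cup A)$ has negative Euler characteristic, and a power of $\phi$ preserves these pieces, so the cases already established, applied to that power on each piece, give alternative (iii). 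I expect the two genuinely hard steps to be: (a) showing that a projectively fixed arational foliation with dilatation $\lambda\ne1$ really produces a pseudo-Anosov map, i.e.\ manufacturing the transverse invariant foliation and proving uniform expansion and contraction of the two transverse measures; and (b) showing that in the non-arational case the reduction data depends only on the isotopy class and is realized by an honest homeomorphism fixing $A$ (and $\partial M$), together with the bookkeeping needed to descend to the complementary subsurfaces and reassemble the pieces. For the complete arguments we refer to \cite{thurston} and \cite{classification}.
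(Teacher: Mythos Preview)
Your proposal is appropriate: the paper does not prove this theorem at all --- it is stated without proof as background (the Nielsen--Thurston classification), with citations to \cite{classification} and \cite{thurston}. Your outline of Thurston's argument via the action on the compactified Teichm\"uller space is correct and in fact goes beyond what the paper provides, while citing the same references.
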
 

Combining the fact that MCG$(\mathbb{D}^2, A; \partial \mathbb{D}^2)\cong B_n$ with the Nielsen--Thurston classification, it follows that braids appear naturally into three types: periodic, pseudo-Anosov and reducible.

\section{Main results}\label{s2}

In this section we present the notion of strong Nielsen type, which was introduced by Boyland \cite{boy} and Matsuoka \cite{matsuoka2}, and we prove Proposition \ref{thm1}. Moreover, we introduce the definition of strong Nielsen equivalence, relative to an $f$-invariant $n$-point set, of periodic orbits and we give a proof of Theorem \ref{thm2}. 
	
	Let $f: M\to M$ be an orientation-preserving homeomorphism, isotopic to the identity, of a compact connected orientable surface $M$ and let $X_n\subset\text{Int}(M)$ be an $n$-point set. Given a periodic orbit $o(x,f)$ of a point $x$ of period $n$, choose a homeomorphism $h_x: M\to M$, isotopic to the identity, such that $h_x(o(x,f))=X_n$.
	
	\begin{definition}\label{defsn}
The strong Nielsen type of the orbit $o(x,f)$, denoted by $snt(x,f)$, is the conjugacy class of $[h_xfh_x^{-1}]$ in MCG$(M, X_n)$, where $[h_xfh_x^{-1}]$ stands for the isotopy class of $h_xfh_x^{-1}$.
	\end{definition}
	
	 Note that we consider the conjugacy class of $[h_xfh_x^{-1}]$ in MCG$(M, X_n)$ in order to have independence of the choice of the homeomorphism $h_x$. Actually, the strong Nielsen type of an orbit is the isotopy class of $f$ relative to the orbit, however we use the homeomorphism $h_x$ in order to send the orbits to a common model. For orientation-preserving homeomorphisms of the disk that preserve the boundary pointwise, due to the identification MCG$(\mathbb{D}^2, X_n; \partial \mathbb{D}^2)\cong B_n$, a strong Nielsen type of an orbit is commonly called braid type and in this case we use the notation $snt(x,f)=bt(x,f)$.
	
\begin{proof}[Proof of Proposition \ref{thm1}]
		The first implication follows from Lemma 8 of \cite{hall} assuming $f=g$.

		To prove the inverse implication let $x,y\in P_n(f)$ and let us assume that $snt(x,f)=snt(y,f)$. Thus, up to conjugation in MCG$(M,X_n)$, $[h_xfh_x^{-1}]=[h_yfh_y^{-1}]$. That is, there exists an isotopy between $h_xfh_x^{-1}$ and $h_yfh_y^{-1}$ in $MCG(M, X_n)$. Thus, there exists a continuous map $H:[0,1]\times (M, X_n)\to (M, X_n)$, such that $H_0=h_xfh_x^{-1}$, $H_1=h_yfh_y^{-1}$ and $H_t:(M, X_n)\to (M,X_n)$ is a homeomorphism, for every $t\in [0,1]$. From this isotopy we can further obtain a self-isotopy of $f$. To do so, we first choose a family of $n$-points sets $K_t\subset \text{Int}(M)$, for $t\in [0,1]$, such that $K_0=o(x,f)$ and $K_1=o(y,f)$. In addition, choose $h_t:M\to M$ homeomorphisms isotopic to the identity, such that $h_0=h_x$ and $h_1=h_y$ and $h_t(K_t)=X_n$, for every $t\in [0,1]$. We define now the family of maps $F:[0,1]\times M\to M$ as $F_t=h_t^{-1}H_th_t$, for $t\in [0,1]$.
		Notice that $F_0=h_0^{-1}H_0h_0=f$, $F_1=h_1^{-1}H_1h_1=f$ and that $F_t=h_t^{-1}H_th_t$ is a homeomorphism of $M$, for every $t\in [0,1]$. Moreover, the choice of the family of the $n$-point sets $K_t$ as well as the choice of the homeomorphisms $h_t$ is done in a continuous way, in order to achieve continuity of $F_t=h_t^{-1}H_th_t$ with respect to the parameter $t$. Thus, by definition, $F_t$ is a continuous family of homeomorphisms of $M$ and thus a self-isotopy of $f$.
		
		Having defined the self-isotopy $F_t$ it remains to construct a path $\gamma:[0,1]\to M$ such that $\gamma(0)=x$, $\gamma(1)=y$ and $\gamma(t)\in P_n(F_t)$, for all $t\in [0,1]$. Note that, without loss of generality, we can assume that $h_x(x)$ and $h_y(y)$ have the same image in $X_n$, since we consider the homeomorphisms $h_xfh_x^{-1}$ and $h_yfh_y^{-1}$ up to conjugation in MCG$(M, X_n)$, in order to have independence of the choice of $h_x$ and $h_y$. That is, $h_x(x)=\alpha\in X_n$ and $h_y(y)=\alpha\in X_n$. It follows that $H_0^n(\alpha)=h_xf^nh_x^{-1}(\alpha)=h_xf^n(x)=h_x(x)=\alpha$ and similarly $H_1^n(\alpha)=h_yf^nh_y^{-1}(\alpha)=\alpha$. In particular, $H^n_t(\alpha)=\alpha\in X_n$, for all $t\in [0,1]$, since all homeomorphisms $H_t$ belong in the same isotopy class in MCG$(M, X_n)$. 
		Let us define now a path $\gamma:[0,1]\to M$ connecting $x$ and $y$ as follows: $\gamma(t)=h^{-1}_t(\alpha)\in M$. It holds that $\gamma(0)=h^{-1}_0(\alpha)=h_x^{-1}(\alpha)=x$ and $\gamma(1)=h^{-1}_1(\alpha)=h_y^{-1}(\alpha)=y$. Moreover, $F^n_t(\gamma(t))=F_t^n(h^{-1}_t(\alpha))=
		h_t^{-1}H^n_th_t(h^{-1}_t(\alpha))=h_t^{-1}H^n_t(\alpha)=h^{-1}_t(\alpha)=\gamma(t)$, for all $t\in [0, 1]$. Therefore, $\gamma(t)\in P_n(F_t)$, for all $t\in [0, 1]$, which concludes the proof.
		
	\end{proof}
		
	Let us focus now in the case of the 2-disc and in orientation-preserving homeomorphisms $f:\mathbb{D}^2\to\mathbb{D}^2$ that fix the boundary pointwise and have a periodic point $x\in P_m(\mathbb{D}^2)$ in the interior of $\mathbb{D}^2$. We saw in Definition \ref{defsn} that the braid type of $o(x,f)$ is defined as the conjugacy class of $[h_xfh_x^{-1}]$ in MCG$(\mathbb{D}^2, X_m; \partial \mathbb{D}^2)\cong B_m$. We will see now three more equivalent ways of defining the braid type of a periodic orbit. First, let us fix an isotopy $f_t:f\simeq id_{\mathbb{D}^2}$ and let $\beta_{o_x}$ be the geometric $m$-braid associated to $o(x,f)$. We recall that $\beta_{o(x,f)}$ is defined as the set of $m$ strands $\{(f_t(o(x,f)), t),\ \text{for}\ t\in[0, 1]\}$ in $\mathbb{D}^2\times[0,1]$. We define the free isotopy class of $\beta_{o_x}$ to be the braid type of $o(x,f)$. We recall that two geometric $m$-braids are said to be freely isotopic if one can be continuously deformed to the other through geometric $m$-braids, without fixing the base points during the deformation. Furthermore, two geometric $m$-braids are freely isotopic if and only if their corresponding loops in the $n^{th}$ unordered configuration space $F_m(\mathbb{D}^2)/S_m$ are freely homotopic. That is, there is a homotopy between the two loops, which let the base point vary. Thus, a braid type can also be deﬁned as a free homotopy class of loops in $F_m(\mathbb{D}^2)/S_m$. Finally, due to the bijective correspondence of the set of free homotopy classes of loops in $F_m(\mathbb{D}^2)/S_m$ and the conjugacy classes of the fundamental group $\pi_1(F_m(\mathbb{D}^2)/S_m)$ for any base point, a braid type can also be seen as a conjugacy class in the group $\pi_1(F_n(\mathbb{D}^2)/S_m)=B_m$. Therefore, the braid type of $o(x,f)$ can equivalently be defined as the conjugacy class of $\beta_{o_x}$ in $B_m$. 
	We direct the reader to \cite{matsuoka} for further details on the equivalence of the definitions of the braid type.
	
	As a consequence of Proposition \ref{thm1} we obtain that the problem of distinguishing strong Nielsen equivalence classes of an orientation-preserving homeomorphism of the 2-disc, turns out to be a conjugacy problem in the braid group. More precisely, we obtain Corollary \ref{corollary}, which asserts the following:
	Let $f:\mathbb{D}^2\to\mathbb{D}^2$ be an orientation-preserving homeomorphism that fixes the boundary pointwise and let $x,y\in\text{Int}(\mathbb{D}^2)$ such that $x,y\in P_m(\mathbb{D}^2)$. Then $x\widesim{\text{\tiny SN}}y$ if and only if, for some $c\in B_m$, $\beta_{o_x}=c\cdot\beta_{o_y}\cdot c^{-1}$, where $\beta_{o_x}, \beta_{o_y}\in B_m$. We recall that $\beta_{o_x}$ and $\beta_{o_y}$ are the $m$-braids associated to $o(x,f)$ and $o(y,f)$ respectively, for a fixed isotopy $f_t:f\simeq id_{\mathbb{D}^2}$.

From this result we observe that the strong Nielsen equivalence of periodic orbits does not take into consideration any other possible $f$-invariant sets in the interior of $\mathbb{D}^2$. For this reason, in what follows we will examine the notion of strong Nielsen equivalence, with respect to an $f$-invariant set in $\text{Int}(\mathbb{D}^2)$.
 	
Let $f:(\mathbb{D}^2, A; \partial\mathbb{D}^2)\to (\mathbb{D}^2, A; \partial\mathbb{D}^2)$ be an orientation-preserving homeomorphism and $A\subset \text{Int}(\mathbb{D}^2)$ be an $f$-invariant $n$-point set, that is, $f(A)=A$. Suppose that $x, y\in \text{Int}(\mathbb{D}^2)\setminus A$ such that $x, y\in P_m(f)$. Moreover, let us fix an isotopy $f_t:f\simeq\text{id}_{\mathbb{D}^2}$. In Section \ref{s1}, we saw that we can associate an $n$-braid, $\beta_A\in B_n$, to the $f$-invariant $n$-point set $A$. In particular, $\beta_A=\{(f_t(A), t),\ \text{for}\ t\in[0, 1]\}$ is a geometric braid of $n$-strands in $\mathbb{D}^2\times[0,1]$. Let $B_{n,m}$ be the subgroup of $B_{n+m}$ that consists of the braids whose strands are divided into two blocks, one of $n$-strands and the other of $m$-strands, possibly linked. Now, every periodic point of $f$ of period $m$, in the interior of the 2-disc that does not belong in $A$, defines an element of $B_{n,m}$, considered as an extension of the element $\beta_A$ by adding $m$ strands, which encode the topological interaction of the periodic orbit with $A$.
 More precisely, the periodic point $x$ defines the braid $\beta_x:=\{(f_t(A\cup o(x,f)), t),\ \text{for}\ t\in[0, 1]\}$. By definition, $\beta_x$ is an $(n+m)$-braid in $\mathbb{D}^2\times[0,1]$, which belongs to $B_{n,m}$, since, under $f_t$,
 the points in $A$ are connected with points in $A$ and the points in $o(x,f)$ are connected with points in $o(x,f)$. Similarly, the periodic point $y$ defines the braid $\beta_y:=\{(f_t(A\cup o(y,f)), t),\ \text{for}\ t\in[0, 1]\}$, an element in $B_{n,m}$.
 
 Let us consider now the homomorphism $p:B_{n,m} \to B_n$, considered geometrically as removing or forgetting the last $m$-stands. For example, $p(\beta_x)=\beta_A$ and $p(\beta_y)=\beta_A$. The kernel of this map is the group $B_m(\mathbb{D}^2\setminus \{n\ \text{points}\})$. It can be seen as the group of the isotopy classes of $m$-braids in $(\mathbb{D}^2\setminus \{n\ \text{points}\})\times [0,1]$ or equivalently as the fundamental group of the quotient of the unordered configuration space $F_m(\mathbb{D}^2\setminus \{n\ \text{points}\})/S_m$. That is,
 $B_m(\mathbb{D}^2\setminus \{n\ \text{points}\})=\pi_1(F_m(\mathbb{D}^2\setminus \{n\ \text{points}\})/S_m)$, as we mentioned in the introduction. These groups fit into the following short exact sequence:
$$1\rightarrow B_m(\mathbb{D}^2\setminus \{n\ \text{points}\})\rightarrow B_{n,m}\xrightarrow{p} B_n\rightarrow 1,$$
	where one can consider $p$ as forgetting the last $m$ strands. It is well-known that this short exact sequence splits and thus we have $B_{n,m}\cong B_m(\mathbb{D}^2\setminus\{n \ \text{points}\}) \rtimes B_n $. One can think of the section $\iota: B_n\to B_{n,m}$ of $p$ as adding $m$ vertical strands at the end of a braid $\beta\in B_n$. Moreover, under the inclusion $i: B_m(\mathbb{D}^2\setminus \{n\ \text{points}\})\to B_{n,m}$, the elements of $B_m(\mathbb{D}^2\setminus \{n\ \text{points}\})$ can be seen as elements in $B_{n,m}$, by adding $n$-vertical strands whose endpoints correspond to the $n$ punctures. 
	As a result, an element $b\in B_{n,m}$ can be uniquely written as $b=\iota(\beta)\cdot i(\gamma)$, for $\beta\in B_m$ and $\gamma\in B_m(\mathbb{D}^2\setminus\{n \ \text{points}\})$. For convenience, we keep the same notation and we denote $i(\gamma)$ as $\gamma$ and $\iota(\beta)$ as $\beta$. Thus, $b=\beta\cdot\gamma\in B_{n,m}$. Furthermore, for any $\beta\in B_n$ we denote by $\phi_{\beta}:B_m(\mathbb{D}^2\setminus \{n\ \text{points}\}) \to B_m(\mathbb{D}^2\setminus \{n\ \text{points}\})$ the action of $B_n$ on  $B_m(\mathbb{D}^2\setminus \{n\ \text{points}\})$, based on the decomposition $B_{n,m}\cong B_m(\mathbb{D}^2\setminus\{n \ \text{points}\}) \rtimes B_n $, which is defined as follows:
	\begin{equation}\label{end}
		\phi_{\beta}(\gamma)=\beta^{-1}\cdot\gamma\cdot\beta,\ \text{for any}\ \gamma \in B_m(\mathbb{D}^2\setminus \{n\ \text{points}\}).
	\end{equation}
Note that $\phi_{\beta}(\gamma)\in B_m(\mathbb{D}^2\setminus \{n\ \text{points}\})$, since $B_m(\mathbb{D}^2\setminus \{n\ \text{points}\})$ is a normal subgroup of $B_{n,m}$. For more insights about the splitting of the short exact sequence we refer the reader to \cite{guaschi2013survey}.
	
	In our setting, we saw that $\beta_x, \beta_y\in B_{n,m}$ can be considered as extensions of $\beta_A\in B_n$ by the orbit of $x$ and $y$ respectively. That is,
	$\beta_x\in B_{n,m}$ and $\beta_y\in B_{n,m}$ can be written uniquely as
	$\beta_x=\beta_A\cdot \beta_{o_x}$ and $\beta_y=\beta_A\cdot \beta_{o_y}$, respectively, where $\beta_A \in B_n$ is the $n$-braid $\{(f_t(A), t),\ \text{for}\ t\in[0, 1]\}$ in $\mathbb{D}^2\times[0,1]$, $\beta_{o_x}\in B_m(\mathbb{D}^2\setminus\{n \ \text{points}\})$ is the $m$-braid $\{(f_t(o(x,f)), t),\ \text{for}\ t\in[0, 1]\}$ in $(\mathbb{D}^2\setminus A)\times [0,1]$ and similarly $\beta_{o_y}\in B_m(\mathbb{D}^2\setminus\{n \ \text{points}\})$ is the $m$-braid $\{(f_t(o(y,f)), t),\ \text{for}\ t\in[0, 1]\}$ in $(\mathbb{D}^2\setminus A)\times [0,1]$. Therefore, the braid $\beta_A$ induces an endomorphism $\phi_{\beta_A}$ of	
     $B_m(\mathbb{D}^2\setminus\{n \ \text{points}\})$. That is, $\phi_{\beta_A}(\beta_{o_x})=\beta_A^{-1}\cdot \beta_{o_x}\cdot\beta_A\in B_m(\mathbb{D}^2\setminus\{n \ \text{points}\})$ and $\phi_{\beta_A}(\beta_{o_y})=\beta_A^{-1}\cdot \beta_{o_y}\cdot\beta_A\in B_m(\mathbb{D}^2\setminus\{n \ \text{points}\})$.
     
     We are ready now to define the strong Nielsen equivalence of periodic orbits, with respect to an $f$-invariant $n$-point set, and then to prove Theorem \ref{thm2}.
     
     \begin{definition}
     Let $f:(M, A)\to (M, A)$ be an orientation-preserving homeomorphism, where $A\subset\text{Int}(M)$ an $f$-invariant $n$-point set. Let $x,y\in \text{Int}(M)\setminus A$ such that $x,y\in P_m(f)$. The periodic orbits of $x$ and $y$ are strong Nielsen equivalent, with respect to $A$, denoted by $o(x,f)\widesim{\text{\tiny SN}}_A o(y,f)$, if $\gamma_x$ is freely isotopic to $\gamma_y$ in $M_f$, while taking into consideration and keeping fixed the orbits $\gamma_{a_j}$ of $p(a_j,0)$ in $M_f$, for all $a_j\in A$. 
     \end{definition}
   
   Observe that it is possible to have $\gamma_{a_j}=\gamma_{a_k}$, for $j\neq k$. In other words, the closed orbits determined by $a, f(a), \dots, f^{k-1}(a)$, where $k\in\mathbb{N}$ is the period of a point $a\in A$, are the same closed curves in $M_f$ with different parametrization. In particular, the number of distinct simple closed curves that occur from $A$ corresponds to the number of cycles and fixed points that arise from the permutation of the set $A$ under $f$. Note that for $A=\emptyset$ then we obtain the standard definition of strong Nielsen equivalence of periodic orbits, Definition \ref{deforbit}. 

It is important to note that, in general, a flow $\phi_t(x): M\times \mathbb{R} \to M$ itself defines naturally an isotopy between the homeomorphism $\phi_1(x):M\to M$
and $\text{id}_M$. In the case of our interest, where $M=\mathbb{D}^2$ and thus $f$ is isotopic to the identity we consider the suspension flow $\psi_t$ on the mapping torus $\mathbb{D}^2_f$. It follows that $\psi_t$ defines naturally an isotopy between the homeomorphism $\psi_1=f$ and $\text{id}_{\mathbb{D}^2}$. During the following proof, the geometric braids of $f$-invariant sets that are considered, are considered with respect to the isotopy induced by the suspension flow on $\mathbb{D}^2_f$. We recall that we have given the definition of the mapping torus and of the suspension flow in Definition \ref{flow}.  
   
	\begin{proof}[Proof of Theorem \ref{thm2}]
		
			Let $f:(\mathbb{D}^2, A; \partial \mathbb{D}^2)\to (\mathbb{D}^2, A; \partial \mathbb{D}^2)$ be an orientation-preserving homeomorphism, where $A\subset\text{Int}(\mathbb{D}^2)$ is an $f$-invariant $n$-point set and $x,y\in \text{Int}(\mathbb{D}^2)\setminus A$ such that $x,y\in P_m(f)$.
			
		 Suppose that $o(x,f)\widesim{\text{\tiny SN}}_A o(y,f)$. Then, the closed curves $\gamma_x$ and $\gamma_y$ are freely isotopic, while keeping fixed the closed curves $\gamma_{a_j}$, for all $a_j\in A$. Since the homeomorphism $f$ is isotopic to the identity we know that $M_f$ is a solid torus, which we consider that it is trivially embedded in $\mathbb{S}^3$, and in particular $M_f$ is homeomorphic to $\mathbb{D}^2\times \mathbb{S}^1$. The closed curves $\gamma_x, \gamma_y$ and $\gamma_{a_j}$, for all $a_j\in A$, can be considered as one-component links, obtained from the closure of the braids $\beta_{o_x}, \beta_{o_y}$ and $\beta_A$, respectively, which correspond to the $m$-braid associated to $o(x,f), o(y,f)$ and to the $n$-braid associated to $A$, with respect to the isotopy $\psi_t:f\simeq id_{\mathbb{D}^2}$, induced by the suspension flow on $\mathbb{D}^2_f$. That is, $\beta_{o_x}=\{\psi_t(o(x,f), t),\ \text{for}\ t\in [0,1]\}$, $\beta_{o_y}=\{\psi_t(o(y,f), t),\ \text{for}\ t\in [0,1]\}$ and $\beta_{A}=\{\psi_t(A, t),\ \text{for}\ t\in [0,1]\}$. Another way of seeing that, consider the mapping torus $\mathbb{D}^2_f$ together with the time-1 orbit curves of all points in $o(x,f), o(y,f)$ and A. If we cut $\mathbb{D}^2_f$ along its gluing, that is, along $\mathbb{D}^2\times\{0\}$, we will obtain $\mathbb{D}^2\times[0, 1]$ and in the interior we will have the braids $\beta_{o_x}, \beta_{o_y}$ and $\beta_A$, whose closure coincide with $\gamma_x, \gamma_y$ and $\gamma_{a_j}$, for all $a_j\in A$.  
		 It is possible that $\beta_{o_x}$ and $\beta_{o_y}$ are linked with $\beta_{A}$ and as we are interested in the way they are linked we will not consider the braids $\beta_{o_x}$ and $\beta_{o_y}$ alone, but along with the braid $\beta_{A}$. Thus, let us consider the two extensions of $\beta_A$, that is, the $(n+m)$-braids $\beta_x=\beta_A\cdot \beta_{o_x}$ and $\beta_y=\beta_A\cdot \beta_{o_y}$. More precisely, $\beta_x=\{\psi_t(A\cup o(x,f), t),\ \text{for}\ t\in [0,1]\}$ and $\beta_y=\{\psi_t(A\cup o(y,f), t),\ \text{for}\ t\in [0,1]\}$. We saw that equivalently we can consider the braids $\beta_x, \beta_y$ as elements of the group $B_{n,m}\subset B_{n+m}$, $\beta_A$ as an element in the group $B_n$ and $\beta_{o_x}, \beta_{o_y}$ as elements in $B_m(\mathbb{D}^2\setminus\{n\ \text{points}\})$. It is well-known that the closure of two braids $\beta, \bar{\beta}$ in $B_n$ give isotopic links in $\mathbb{D}^2\times\mathbb{S}^1$ if and only if they are conjugate in $B_n$, see \cite{kassel}. The conjugacy in $B_n$ corresponds to the change of base point of $\beta$ to obtain $\bar{\beta}$.
		 From hypothesis it holds that $\gamma_x$ and $\gamma_y$ are freely isotopic, while keeping fixed the closed curves $\gamma_{a_j}$, for all $a_j\in A$. In other words, $\bigcup_{a_j\in A}\gamma_{a_j}\cup\gamma_x$ is isotopic to $\bigcup_{a_j\in A}\gamma_{a_j}\cup\gamma_y$, allowing only the base points of $\gamma_x$ and $\gamma_y$ vary during the deformation. From the closure of $\beta_x=\beta_A\cdot \beta_{o_x}\in B_{n,m}$ and $\beta_y=\beta_A\cdot \beta_{o_y}\in B_{n,m}$, we obtain $\bigcup_{a_j\in A}\gamma_{a_j}\cup\gamma_x$ and $\bigcup_{a_j\in A}\gamma_{a_j}\cup\gamma_y$ in $\mathbb{D}^2\times \mathbb{S}^1$ respectively.  
         Therefore, it follows that $\beta_x=\beta_A\cdot \beta_{o_x}$ and $\beta_y=\beta_A\cdot \beta_{o_y}$ are conjugate in $B_{n,m}$ by an element $c$ in $B_m(\mathbb{D}^2\setminus\{n\ \text{points}\})$, which corresponds to the change of base point of $\beta_{o_y}$ in order to obtain $\beta_{o_x}$, while keeping $\beta_{A}$ fixed. That is, $\beta_x=c\cdot \beta_y\cdot c^{-1}$, for a $c\in  B_m(\mathbb{D}^2\setminus\{n\ \text{points}\})$.
		 Let us assume now that $\beta_x=c\cdot \beta_y\cdot c^{-1}$, for some $c\in  B_m(\mathbb{D}^2\setminus\{n\ \text{points}\})$, where $\beta_x=\{\psi_t(A\cup o(x,f), t),\ \text{for}\ t\in [0,1]\}$ and $\beta_y=\{\psi_t(A\cup o(y,f), t),\ \text{for}\ t\in [0,1]\}$. The conjugacy of $\beta_y=\beta_A\cdot \beta_{o_y}$ by an element in $B_m(\mathbb{D}^2\setminus\{n\ \text{points}\})$ corresponds to the change of base point of $\beta_{o_y}$ to obtain $\beta_{o_x}$, while keeping fixed $\beta_A$. The closure of
		 $\beta_x=\beta_A\cdot \beta_{o_x}\in B_{n,m}$ and $\beta_y=\beta_A\cdot \beta_{o_y}\in B_{n,m}$ gives us $\bigcup_{a_j\in A}\gamma_{a_j}\cup\gamma_x$ and $\bigcup_{a_j\in A}\gamma_{a_j}\cup\gamma_y$ in $\mathbb{D}^2\times \mathbb{S}^1$ respectively. 
		 Therefore, $\bigcup_{a_j\in A}\gamma_{a_j}\cup\gamma_x$ and $\bigcup_{a_j\in A}\gamma_{a_j}\cup\gamma_y$ are isotopic in $\mathbb{D}^2\times \mathbb{S}^1$, while letting the base point of $\gamma_x$ and $\gamma_y$ vary, but keeping $\bigcup_{a_j\in A}\gamma_{a_j}$ fixed. We deduce that $\gamma_x$ and $\gamma_y$ are freely isotopic, while keeping $\gamma_{a_j}$, for all $a_j\in A$, fixed. We conclude that $o(x,f)\widesim{\text{\tiny SN}}_A o(y,f)$.
			
		We continue with the proof of the second equivalence of the theorem. 
		Suppose that $\beta_x=c\cdot\beta_y\cdot c^{-1}$, for $\beta_x, \beta_y\in B_{n,m}$ and $c\in B_m(\mathbb{D}^2\setminus \{n\ \text{points}\})$. By definition of $\beta_x$ and $\beta_y$, we have $\beta_A\cdot \beta_{o_x}=c\cdot (\beta_A\cdot \beta_{o_y})\cdot c^{-1}=\beta_A\cdot \phi_{\beta_A}(c)\cdot \beta_{o_y}\cdot c^{-1}$. The second equality follows from the definition of the endomorphism $\phi_{\beta_A}$, described in \refeq{end}. Thus, $\beta_{o_x}=\phi_{\beta_A}(c)\cdot \beta_{o_y}\cdot c^{-1}$, for some $c\in B_m(\mathbb{D}^2\setminus \{n\ \text{points}\})$. On the other hand, suppose that $\beta_{o_x}=\phi_{\beta_A}(c)\cdot \beta_{o_y}\cdot c^{-1}$, for some $c\in B_m(\mathbb{D}^2\setminus \{n\ \text{points}\})$. It follows that $\beta_A\cdot \beta_{o_x}=\beta_A\cdot \phi_{\beta_A}(c)\cdot \beta_{o_y}\cdot c^{-1}$ and from the definition of the endomorphism $\phi_{\beta_A}$ we obtain that $\beta_A\cdot \beta_{o_x}=c\cdot\beta_A\cdot \beta_{o_y}\cdot c^{-1}$, which implies that $\beta_x=c\cdot\beta_y\cdot c^{-1}$.		
		This proves the second equivalence and concludes the proof of the theorem.
		
	\end{proof}
	We conclude this section with few remarks.
	
	\begin{remark}
If $\beta_A$ is the identity element in $B_n$, that is, a collection of $n$ vertical strands, then it is like considering $\mathbb{D}^2$ with $n$-holes in the interior. Moreover, in this case it follows that $\beta_{o_x}=c\cdot\beta_{o_y}\cdot c^{-1}$, where the braids $\beta_{o_x}, \beta_{o_y}$ and $c$ can been considered as geometric braids in $\mathbb{D}^2$ with $n$-holes in the interior.
	\end{remark}
	
	\begin{remark}
If $A=\emptyset$ then Theorem \ref{thm2} coincides with Corollary \ref{corollary}.
	\end{remark}

\begin{remark}
The braid type of an $f$-periodic $n$-orbit in $\text{Int}(\mathbb{D}^2)$ is unique up to multiples of the full twist braid $\Delta_n^2\in B_n$, see \cite{matsuoka3}. The braid $\Delta_n^2=(\sigma_1\dots\sigma_{n-1})^n$ generates the center of $B_n$, for $n\geq 3$, and geometrically corresponds to the braid of $n$ vertical strands after a complete rigid rotation of the 2-disc. Thus, choosing a different isotopy results the same braid type, up to multiples of the full twist $\Delta_n^2$. In particular, if two periodic orbits have the same braid type, under one isotopy, changing the isotopy will not affect this result. That is, these two periodic orbits will still have the same braid type.
\end{remark}	
	
\section{Remarks}\label{s3}

In this section we will make some important remarks regarding Theorem $\ref{thm2}$.
We recall the following result.

\begin{theorem}[Guaschi, \cite{guaschi}]\label{gu}
Let $\phi$ be an automorphism of $\mathbb{F}_n$ induced by a braid $\beta\in B_n$. Then $u,v\in \mathbb{F}_n$
are $\phi$-conjugate if and only if $\beta_u$ and $\beta_v$ are conjugate in $B_{n,1}$ via an element of $\mathbb{F}_n$.
\end{theorem}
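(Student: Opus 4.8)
The plan is to derive Theorem~\ref{gu} as the special case $m=1$ of Theorem~\ref{thm2}, where a periodic point of period $1$ is simply a fixed point. First I would record the degeneration of the groups involved. Since $F_1(\mathbb{D}^2\setminus\{n\ \text{points}\})=\mathbb{D}^2\setminus\{n\ \text{points}\}$, we have
\[
B_1(\mathbb{D}^2\setminus\{n\ \text{points}\})=\pi_1(\mathbb{D}^2\setminus\{n\ \text{points}\})\cong\mathbb{F}_n,
\]
the group $B_{n,1}\subset B_{n+1}$ of Theorem~\ref{thm2} is exactly the subgroup of $B_{n+1}$ whose induced permutation fixes $(n+1)$ (the group $B_{n,1}$ of the introduction), and $U_{n+1}=B_1(\mathbb{D}^2\setminus\{n\ \text{points}\})\cong\mathbb{F}_n$. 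The split short exact sequence $1\to B_1(\mathbb{D}^2\setminus\{n\ \text{points}\})\to B_{n,1}\xrightarrow{p}B_n\to 1$ is the one given by forgetting the last strand, so the action $\phi_{\beta_A}(\gamma)=\beta_A^{-1}\gamma\beta_A$ of $\beta_A\in B_n$ on $B_1(\mathbb{D}^2\setminus\{n\ \text{points}\})\cong\mathbb{F}_n$ from \eqref{end} is precisely the Artin automorphism of $\mathbb{F}_n$ induced by the braid $\beta_A$; calling $\beta:=\beta_A$, this is Guaschi's automorphism $\phi$.

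Next I would match the geometric braids. For a fixed point $x\in\text{Int}(\mathbb{D}^2)\setminus A$, following $A\cup\{x\}$ under the chosen isotopy yields the braid $\beta_x\in B_{n,1}\subset B_{n+1}$, and this is exactly the braid $\beta_u$ that Guaschi attaches to $x$ (equivalently to the element $u\in\mathbb{F}_n$); likewise $\beta_y=\beta_v$. In the decomposition $\beta_x=\beta_A\cdot\beta_{o_x}$ the second factor $\beta_{o_x}\in B_1(\mathbb{D}^2\setminus\{n\ \text{points}\})\cong\mathbb{F}_n$ is the element $u$, and similarly $\beta_{o_y}=v$.

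With these identifications, for $m=1$ condition (ii) of Theorem~\ref{thm2} states that $\beta_u=\beta_x$ and $\beta_v=\beta_y$ are conjugate in $B_{n,1}$ by an element $c\in B_1(\mathbb{D}^2\setminus\{n\ \text{points}\})\cong\mathbb{F}_n$, i.e. that $\beta_u$ and $\beta_v$ are conjugate in $B_{n,1}$ via an element of $\mathbb{F}_n$; and condition (iii) states that $u=\beta_{o_x}=\phi_{\beta_A}(c)\cdot\beta_{o_y}\cdot c^{-1}=\phi(c)\cdot v\cdot c^{-1}$ for some $c\in\mathbb{F}_n$, which is precisely the statement that $u$ and $v$ are $\phi$-conjugate (read off directly with $w=c$ after relabelling $u\leftrightarrow v$, or using that twisted conjugacy is symmetric because $\phi$ is an automorphism). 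Therefore the equivalence (ii)$\Leftrightarrow$(iii) of Theorem~\ref{thm2} --- established there purely algebraically from the semidirect product decomposition --- is exactly Theorem~\ref{gu}. As a bonus one may also invoke condition (i): for fixed points strong Nielsen equivalence relative to $A$ coincides with Nielsen equivalence relative to $A$ by Remark~\ref{same}, and the latter is the Reidemeister ($\phi$-)conjugacy of the associated elements $u,v\in\mathbb{F}_n$ by the classical correspondence recalled in the introduction, so (i)$\Leftrightarrow$(iii) re-proves Theorem~\ref{gu} and yields the Nielsen-class reformulation mentioned there.

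The main obstacle is bookkeeping rather than substance: one must check that the automorphism $\phi_{\beta_A}$ coming from $B_{n,1}\cong\mathbb{F}_n\rtimes B_n$ agrees with ``the automorphism of $\mathbb{F}_n$ induced by $\beta$'' in Guaschi's sense, and that the braid $\beta_x$ obtained here by suspending $A\cup\{x\}$ is Guaschi's $\beta_u$. Both reduce to the classical fact that the two standard descriptions of the Artin action --- via mapping classes of the $n$-punctured disc, and via conjugation inside $B_{n,1}$ --- coincide; this should be stated with care, possibly absorbing a harmless inversion/orientation convention by replacing $\beta$ with $\beta^{-1}$.
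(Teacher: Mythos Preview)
Your proposal is correct and follows essentially the same route as the paper itself: the paper does not give an independent proof of Theorem~\ref{gu} (it is cited as Guaschi's result), but the Remark immediately following it explains precisely how to recover it as the $m=1$ case of Theorem~\ref{thm2}, via the identifications $B_1(\mathbb{D}^2\setminus\{n\ \text{points}\})\cong\mathbb{F}_n$ and $\phi_{\beta_A}=\phi$. Your write-up is simply a more detailed version of that Remark, and your additional observation that the purely algebraic equivalence (ii)$\Leftrightarrow$(iii) already suffices (without invoking (i)) is a nice sharpening.
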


\begin{remark}
Applying Theorem \ref{thm2} for $m=1$, we deduce that the fixed points $x, y$ are strong Nielsen equivalent, with respect to $A$, if and only if
$\beta_{o_x}=\phi_{\beta_A}(c)\cdot \beta_{o_y}\cdot c^{-1},\ \text{for some}\ c\in B_1(\mathbb{D}^2\setminus \{n\ \text{points}\})$. Notice that 
$B_1(\mathbb{D}^2\setminus \{n\ \text{points}\})\cong \mathbb{F}_n$, where $\mathbb{F}_n$ is the free group of rank $n$. Moreover, $\phi_{\beta_A}$ turns out to be an endomorphism of $\mathbb{F}_n$. Thus, another way of interpreting Theorem \ref{gu}, could be the following: Let $A$ be an $f$-invariant $n$-point set, whose associated braid is denoted by $\beta\in B_n$ and its induced automorphism of $\mathbb{F}_n$ by $\phi$. The fixed points $p_1, p_2$, that correspond to the braids $u,v \in B_1(\mathbb{D}^2\setminus \{n\ \text{points}\})\cong \mathbb{F}_n$ are strong Nielsen equivalent, with respect to $A$, if and only if $\beta_u$ and $\beta_v$ are conjugate in $B_{n,1}$ via an element of $\mathbb{F}_n$. Note that $\beta_u$ and $\beta_v$ are the braids associated to $A\cup p_1$ and $A\cup p_2$ respectively. With this remark we justify how Theorem \ref{thm2} can be considered as a generalization of Theorem \ref{gu} for the case of periodic points.
\end{remark}

\begin{remark}
	The equivalence $o(x,f) \widesim{\text{\tiny SN}}_A o(y,f)$ if and only if 
	$\beta_{o_x}=\phi_{\beta_A}(c)\cdot \beta_{o_y}\cdot c^{-1}$, for $\beta_{o_x}, \beta_{o_y}, c\in B_m(\mathbb{D}^2\setminus \{n\ \text{points}\})$ and $\beta_A\in B_n$, of Theorem \ref{thm2}, can be seen as an equivalence between strong Nielsen classes, with respect to $A$, and Reidemeister classes, which gives an algebraic characterization of such strong Nielsen classes.
\end{remark}

\begin{remark}
	Let $x,y\in P_m(f)$. We would like to compare periodic Nielsen equivalence of $x,y$ with strong Nielsen equivalence of $x,y$ in terms of braids. We notice that the definition of strong Nielsen equivalence using paths takes into account the entire orbit of $\gamma:[0,1]\to \mathbb{D}^2$, while in the case of periodic Nielsen equivalence we take into account just its $m^{th}$ iterate. In terms of braids, and with respect to an $f$-invariant $n$-point set $A$, one can see from Theorem \ref{thm2} that in strong Nielsen equivalence we consider braids in $B_{n,m}$, while in periodic Nielsen equivalence we consider braids in $B_{n, 1}$. For periodic Nielsen equivalence we consider braids in $B_{n,1}$ since we can use Theorem \ref{gu}, seeing the periodic points as fixed points of the map $f^m$. As a result, from the $m$ stands of a braid in $B_{n,m}$ we get information about the motion of every point in the $m$-periodic orbit, while from the last stand of a braid in $B_{n,1}$ we get information only about the motion of the periodic point under $f^m$.

\end{remark}

\section{Braid forcing}\label{s4}
 
    The main reference for what follows is \cite{boju}. Let $f:(\mathbb{D}; \partial\mathbb{D}^2)\to (\mathbb{D}; \partial\mathbb{D}^2)$ be an orientation-preserving homeomorphism that fixes the boundary of $\mathbb{D}^2$ pointwise. The aim of this section is to show an application of Theorem \ref{thm2} to the problem of finding periodic orbits forced by a given one. In particular, given $f$ we show that the periodic orbits forced by a given one, obtained by the trace formula of Jiang--Zheng \cite{boju}, are all in different strong Nielsen classes. Let us first present important definitions and results we will use.
	
	With the aim to generalize the partial
	order that occurs in Sharkovski\u{\i}’s theorem about the periods of periodic orbits for maps
	of the line \cite{shark}, Boyland \cite{boy} and Matsuoka \cite{matsuoka} introduced the notion of braid types, defined in Section \ref{s2}, for characterizing the forcing relation in dimension 2. By forcing relation we mean that a given periodic orbit implies the existence of other orbits. We recall that in the case of $\mathbb{D}^2$ an $n$-braid type is in fact a conjugacy class in the braid group $B_n$.
	
	\begin{definition}
A braid $\beta'$ is an extension of $\beta$ if $\beta'$ is a union of $\beta$ and another braid $\gamma$. The braids $\beta,\ \gamma$ are disjoint but possibly linked.
	\end{definition}

As described in Section \ref{s2}, note that any braid $\beta'\in B_{n,m}$ is an extension of a braid $\beta\in B_n$ by a braid $\gamma\in B_{m}(\mathbb{D}^2\setminus\{n \ \text{points}\})$. 
We denote by $\beta_P\in B_n$ the braid that corresponds to the $f$-invariant subset $P\subset \text{Int}(\mathbb{D}^2)$ under the isotopy $f_t:f\simeq\text{id}_{\mathbb{D}^2}$. Moreover, by $[\beta]$ we denote the conjugacy class, in the group specified by the context, of a braid $\beta$.
	\begin{definition}
A braid $\beta$ forces a braid $\gamma$ if, for any $f$ and any isotopy $f_t:f\simeq\text{id}_{\mathbb{D}^2}$, the existence of an $f$-invariant set $P$ with $[\beta_P]=[\beta]$ guarantees the existence of an $f$-invariant set $Q$ with $[\beta_Q]=[\gamma]$. 
\end{definition}

\begin{definition}
An extension $\beta'$ is forced by $\beta$ if, for any $f$ and any isotopy $f_t:f\simeq\text{id}_{\mathbb{D}^2}$, the existence of an $f$-invariant set $P$ with $[\beta_P]=[\beta]$ guarantees the existence of an $f$-invariant set $Q\subset \mathbb{D}^2\setminus P$ with $[\beta_{P\cup Q}]=[\beta']$.
	\end{definition}

We are now ready to state the theorem by Jiang--Zheng \cite{boju}, where they deduce a trace formula for the computation of forced braid extensions.

\begin{theorem}[Jiang--Zheng, \cite{boju}]\label{trace}
Let $\beta' \in B_{n+m}$ be an extension of $\beta\in B_n$. Then $\beta'$ is forced by $\beta$ if and only if $\beta'$ is neither collapsible nor peripheral relative to $\beta$, and the conjugacy class $[\beta']$ has a non-zero coefficient in $\text{tr}_{B_{n+m}}\zeta_{n,m}(\beta)$.
\end{theorem}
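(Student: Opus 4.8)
Since this theorem is due to Jiang--Zheng \cite{boju}, I only outline the strategy one would follow to prove it. The plan is to combine the Nielsen--Thurston classification with Lefschetz--Nielsen (Reidemeister trace) fixed point theory applied to the fibration attached to the short exact sequence $1\to B_m(\mathbb{D}^2\setminus\{n\ \text{points}\})\to B_{n,m}\xrightarrow{p}B_n\to 1$ of Section~\ref{s2}. First I would replace $f$ by the Thurston canonical representative $\phi$ of the mapping class $\beta$ acting on the $n$-punctured disc $D_n:=\mathbb{D}^2\setminus\{n\ \text{points}\}$; on each of its pseudo-Anosov and periodic pieces $\phi$ is \emph{efficient}, that is, it minimizes the number of periodic points in every Nielsen class of every iterate. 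An $f$-invariant set $Q\subset\mathbb{D}^2\setminus P$ of $m$ points with $[\beta_{P\cup Q}]=[\beta']$ then corresponds to a fixed point of the map $\Phi$ induced by $\phi$ on the unordered configuration space $F_m(D_n)/S_m$; and since the braid type of an orbit is a conjugacy class in $\pi_1(F_m(D_n)/S_m)=B_m(D_n)$ — as recalled in Section~\ref{s2} — the conjugacy class $[\beta']\in B_{n+m}$ is exactly the Reidemeister class of $\Phi$ in which that fixed point lies.

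The second step is to compute the generalized Lefschetz number (Reidemeister trace) of $\Phi$ together with that of all of its iterates. The configuration space $F_m(D_n)/S_m$ is itself fibred over lower-dimensional configuration spaces and $\Phi$ is compatible with these fibrations, so the Reidemeister traces of $\Phi$ and its powers can be assembled from those of the building blocks via the product formula of fixed point theory; packaging this family of traces into a zeta-type generating series produces precisely the Jiang--Zheng operator $\zeta_{n,m}(\beta)$, whose ``trace'' $\text{tr}_{B_{n+m}}\zeta_{n,m}(\beta)$ lies in the free abelian group on conjugacy classes of $B_{n+m}$. In this dictionary the coefficient of $[\beta']$ is the fixed-point index sum of the fixed point class of $\Phi$ labelled by $[\beta']$, which is non-zero exactly when that class is essential.

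The heart of the matter is then the equivalence: $\beta'$ is forced by $\beta$ if and only if the associated fixed point class of $\Phi$ is essential \emph{and} the orbit it produces is neither collapsible nor peripheral relative to $\beta$. One direction is soft: because $\phi$ is the efficient Thurston representative and the essential fixed point classes are invariant under isotopy rel $P$, an essential class contributes a non-zero index that cannot be cancelled, so every $f$ realizing $\beta$ must carry an invariant set producing that class — hence $\beta'$ is forced — unless the corresponding orbit can be absorbed into a collar of a puncture or of $\partial\mathbb{D}^2$ (peripheral) or contracted along a reducing curve until its braid type degenerates (collapsible). For the converse one must, whenever the class is inessential or the orbit is peripheral or collapsible, exhibit an explicit isotopy of a model homeomorphism realizing $\beta$ that destroys every invariant set of braid type $[\beta']$. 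I expect the main obstacle to be exactly this equivalence in the reducible Thurston case: carefully accounting for the essential fixed point classes of $\Phi$ that are concentrated near the reducing system — they contribute non-trivially to $\text{tr}_{B_{n+m}}\zeta_{n,m}(\beta)$ and yet are \emph{not} forced, which is precisely what the collapsibility and peripherality hypotheses are designed to remove — and verifying that the combinatorics of the trace formula isolate only the genuinely forced conjugacy classes.
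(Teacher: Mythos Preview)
The paper does not contain a proof of this theorem: it is stated as a result of Jiang--Zheng and attributed to \cite{boju}, with no argument given beyond a brief description of how to \emph{apply} the formula (compute the trace, merge conjugate terms, discard collapsible and peripheral extensions). There is therefore no ``paper's own proof'' to compare your proposal against.

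Your outline is a plausible high-level sketch of the ingredients one expects in the Jiang--Zheng argument --- passing to the Thurston representative, interpreting forced extensions as essential fixed point classes of the induced map on the unordered configuration space, and reading off indices from a Reidemeister-trace/zeta formalism --- and you correctly flag the reducible case and the role of the collapsible/peripheral exclusions as the delicate points. But since the present paper simply quotes the result, any assessment of whether your sketch matches the actual proof would have to be made against \cite{boju} itself, not against this paper.
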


By $\zeta_{n,m}$ they denote a matrix representation of $B_n$ over a free $\mathbb{Z}B_{n+m}$-module and the trace $\text{tr}_{B_{n+m}}$ is meant to take values in the free Abelian group generated by the conjugacy classes is $B_{n+m}$. Thus, to obtain forced extensions, that correspond to $(n+m)$-braids in $B_{n,m}$, of a given braid in $B_n$ we can apply the following steps. Compute the trace $\text{tr}_{B_{n+m}}$, merge conjugate terms in the sum by solving the conjugacy problem in $B_{n+m}$, then rule out certain unwanted terms, which correspond to collapsible and peripheral extensions, and finally the non-zero terms remaining after the cancellation are exactly the forced extensions, that correspond to $(n+m)$-braids, of a given $n$-braid. Roughly speaking, collapsible and peripheral extensions relative to $\beta$, are those braids which contain strands that may be merged or moved to infinity while keeping $\beta$ untouched. For more details we refer the reader to \cite{boju}.

In \cite{algorithm}, using Theorem \ref{trace} they provide an algorithm for solving the problem of finding periodic orbits linked to a given one, for orientation-preserving 2-disc homeomorphisms. We state their result in the following theorem.

\begin{theorem}[Liu,\cite{algorithm}]\label{liu}
Given a set $P$ of $n$ points in $\text{Int}(\mathbb{D}^2)$, a cyclic braid $\beta\in B_n$, and an integer $m>0$, there is an algorithm to decide a homeomorphism $f:\mathbb{D}^2\to \mathbb{D}^2$ fixing $\partial\mathbb{D}^2$ pointwise, $P$ as an $n$-periodic orbit of $f$ that induces the braid $\beta$, and $f$ has an $m$-periodic orbit with non-zero linking number about $P$.  
\end{theorem}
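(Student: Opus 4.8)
The plan is to reduce the statement to the Jiang--Zheng trace formula of Theorem~\ref{trace}, which is already an effective criterion, and to package its use into the following two-stage procedure. Given the input $(P,\beta,m)$, with $\beta\in B_n$ a cyclic braid: in the first stage one computes, from $\text{tr}_{B_{n+m}}\zeta_{n,m}(\beta)$, the finite list of conjugacy classes of $(n+m)$-braids forced by $[\beta]$; in the second stage one inspects, via the splitting $B_{n,m}\cong B_m(\mathbb{D}^2\setminus\{n\ \text{points}\})\rtimes B_n$, which of these forced extensions $\beta'$ have their extra $m$ strands forming a single $m$-periodic orbit that winds non-trivially around $P$, and selects one such $\beta'$. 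The homeomorphism output by the algorithm is then the standard realization of $\beta$ on $P$ --- the mapping class obtained by ``sliding $\mathbb{D}^2$ down the braid $\beta$'' as recalled in Section~\ref{s1}, which has $P$ as an $n$-periodic orbit inducing $\beta$. Since $\beta'$ is forced by $\beta$, Theorem~\ref{trace} furnishes an $f$-invariant set $Q\subset\mathbb{D}^2\setminus P$ with $[\beta_{P\cup Q}]=[\beta']$, and by construction of $\beta'$ this $Q$ is an $m$-periodic orbit with non-zero linking number about $P$.

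For the first stage I would make each ingredient of Theorem~\ref{trace} explicit. Starting from a fixed Artin word for $\beta$, one writes down the matrix $\zeta_{n,m}(\beta)$ over the group ring $\mathbb{Z}B_{n+m}$ following the Fox-calculus/Burau-type construction of \cite{boju}, and then its trace, a finite $\mathbb{Z}$-linear combination of elements of $B_{n+m}$. Passing from elements to conjugacy classes requires merging conjugate terms in this sum; this is exactly the conjugacy problem in $B_{n+m}$, which is algorithmically solvable (for instance via Garside normal forms and summit-set algorithms), so the merge terminates. Finally one discards the terms that are collapsible or peripheral relative to $\beta$; these two families are characterized combinatorially in \cite{boju} --- the extra strands can be amalgamated, respectively pushed out to $\partial\mathbb{D}^2$, while leaving $\beta$ untouched --- so membership can be tested on the braid word. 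By Theorem~\ref{trace}, the conjugacy classes carrying a non-zero remaining coefficient are precisely the $(n+m)$-braids forced by $[\beta]$.

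For the second stage, each forced extension lies in $B_{n,m}$ and, using the decomposition recalled in Section~\ref{s2}, factors uniquely as $\beta'=\beta_P\cdot\gamma$ with $\gamma\in B_m(\mathbb{D}^2\setminus\{n\ \text{points}\})$. From $\gamma$ one reads off its underlying permutation $s(\gamma)\in S_m$ and keeps only those $\gamma$ for which $s(\gamma)$ is an $m$-cycle, so that the invariant set $Q$ is a single orbit of period exactly $m$; and one evaluates the homomorphism $B_m(\mathbb{D}^2\setminus\{n\ \text{points}\})\to\mathbb{Z}$ recording the total winding of the $m$ strands about the $n$ punctures of $P$, keeping only those $\gamma$ with non-zero value, this integer being the linking number of $Q$ about $P$. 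Picking any forced $\beta'$ passing both tests completes the construction. One may moreover observe, via Theorem~\ref{thm2}, that two such choices $\beta'=\beta_P\cdot\gamma$ and $\beta'=\beta_P\cdot\gamma'$ yield strong Nielsen inequivalent orbits with respect to $P$ exactly when $\gamma,\gamma'$ are not related by the twisted conjugacy $\gamma=\phi_{\beta_P}(c)\cdot\gamma'\cdot c^{-1}$; this is the link between Liu's algorithm and the application discussed in this section, though it is not needed for the statement itself.

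The main obstacle is the effectivity of the first stage. Writing down $\zeta_{n,m}$ explicitly, and --- more delicately --- recognizing algorithmically which terms of the trace are collapsible or peripheral relative to $\beta$, is exactly where the work of \cite{boju} sits, and a complete proof must invoke that machinery in full rather than merely cite it. A secondary point arises if the statement is read as asserting that such an $f$ always exists: one must then show that for a cyclic $\beta$ the trace always contains at least one forced extension whose $\gamma$-part is an $m$-cycle with non-zero winding about $P$. I would establish this by an explicit local model --- place, near one point of $P$, a small $f$-invariant annulus carrying $m$ strands that are cyclically permuted and wind once around $P$ over one period --- and then verify that the resulting extension of $\beta$ is neither collapsible nor peripheral and survives the cancellation in the trace; checking that it is not cancelled is the genuinely non-trivial step of this verification.
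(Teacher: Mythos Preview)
The paper does not actually prove Theorem~\ref{liu}: it is a cited external result of Liu~\cite{algorithm}, and the paper only offers a two-sentence sketch of the idea immediately after the statement. That sketch --- realize $\beta$ as a homeomorphism via $B_n\cong\mathrm{MCG}(\mathbb{D}^2,P;\partial\mathbb{D}^2)$, then apply the Jiang--Zheng trace formula of Theorem~\ref{trace} to determine forced extensions in $B_{n,m}$ --- is precisely the skeleton of your proposal, so your approach is aligned with what the paper indicates.

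Your write-up is considerably more detailed than anything the paper provides, and the extra detail is reasonable: making the conjugacy-merging effective via Garside-type algorithms, using the splitting of $B_{n,m}$ to read off the permutation type and the winding of the $\gamma$-part, and flagging the collapsible/peripheral recognition as the point where one must genuinely invoke \cite{boju}. Two cautionary remarks. First, your reading of the theorem as a decision/construction procedure is the sensible one; the paper's phrasing is awkward, but nothing in the paper or in your argument establishes that a qualifying $m$-periodic orbit \emph{always} exists, and your proposed ``local model'' for that stronger claim would need an actual index/non-cancellation computation in the trace to be convincing. Second, the paper does not verify the effectivity of the collapsible/peripheral tests either; like you, it defers this to \cite{boju} and \cite{algorithm}. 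So your proposal is correct in spirit and matches the paper's (very brief) treatment, but a self-contained proof would still require importing the machinery from those references.
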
 
 
We briefly describe the idea of the algorithm. Given a cyclic braid $\beta\in B_n$, due to the identification $B_n\cong \text{MCG}(\mathbb{D}^2, P; \partial\mathbb{D}^2)$, we obtain an orientation-preserving homeomorphism of the 2-disc with $P$ an $n$-periodic orbit of $f$, that corresponds to $\beta$. Then we can apply Theorem \ref{trace} to determine forced extensions $\beta'\in B_{n,m}$ of $\beta$ by computing the trace $\text{tr}_{B_{n+m}}\zeta_{n,m}(\beta)$. For more insights the reader is directed to \cite{algorithm}.
\\

\textbf{Application:} We recall that in Theorem \ref{thm2} we show that if two distinct extensions $\beta_x=\beta_{A}\cdot\beta_{o_x}\in B_{n,m}\subset B_{n+m}$ and $\beta_y=\beta_{A}\cdot\beta_{o_y}\in B_{n,m}\subset B_{n+m}$ of $\beta_A\in B_n$ are conjugate in $B_{n,m}\subset B_{n+m}$, by an element in a specific subset of $B_{n,m}$, then the periodic orbit that corresponds to $\beta_{o_x}$ is strong Nielsen equivalent to the periodic orbit that corresponds to $\beta_{o_y}$, with respect to the invariant set $A$. From Theorem \refeq{liu}, given a braid $\beta\in B_n$, that corresponds to an $n$-periodic point $p$ of $f$ we can determine forced extensions $\beta'=\beta\cdot\gamma\in B_{n,m}$, where $\gamma\in B_m$, by applying Theorem \ref{trace}. These extensions determine in their turn the forced periodic orbits of the points $q$, that correspond to the braids $\gamma\in B_m$. In order to compute the forced extensions one first has to compute the trace given in Theorem \ref{trace}, which is the sum of distinct conjugacy classes of extensions $\beta'$ of $\beta$. Thus, from Theorem \ref{thm2}, each representative of these conjugacy classes corresponds to a forced braid $\beta'$ of $\beta$, or equivalently, to a periodic orbit of a point $q$ forced by the periodic orbit of the periodic point $p$, which belongs to a different strong Nielsen class, with respect to the periodic orbit $p$. In other words, loosely speaking the trace $\text{tr}_{B_{n+m}}$ sums over the strong Nielsen classes of forced $m$-periodic orbits of $f$, with respect to $\beta$. Last but not least, for $m=1$, using Theorem \ref{trace}, we obtain forced periodic orbits of order 1, i.e. forced fixed points. As we mentioned above, the trace $\text{tr}_{B_{n+1}}$ sums over the strong Nielsen classes of fixed points, with respect to the given $\beta$. Thus, from Remark \ref{same}, it follows that the trace formula sums over the Nielsen classes of $f$, with respect to the given $\beta$. This should not be a surprising result, since the trace formula defined in \cite{boju} is actually a generalized version of the generalized Lefschetz number.

\section*{Acknowledgement}
The author is grateful to the University Center of
Excellence "Dynamics, Mathematical Analysis and Artificial Intelligence" at the
Nicolaus Copernicus University for the provided facilities, the great hospitality and the financial support. In addition, the author would like to thank Philip Boyland for his generous help and the useful discussions.

\bibliographystyle{plain}
\bibliography{bibsne}

\end{document}